\numberwithin{equation}{section}
\newtheorem{thm}{Theorem}[section]
\newtheorem{conjecture}[thm]{Conjecture}
\newtheorem{lem}[thm]{Lemma}
\newtheorem{claim}[thm]{Claim}
\newenvironment {proof} {\noindent{\textbf {Proof.}}}{\hfill$\Box$}
\newcommand{\ml}{l\kern-0.55mm\char39\kern-0.3mm}
\title{\textbf{Spectral radius of graphs of given size with forbidden a fan graph $F_6$}}
\author{{\small Jing Gao, Xueliang Li} \\
{\small  Center for Combinatorics and LPMC}\\
{\small Nankai University, Tianjin 300071, China}\\
{\small gjing1270@163.com, lxl@nankai.edu.cn}\\
}
\date{}
\begin{document}
\maketitle
\begin{abstract}
Let $F_k=K_1\vee P_{k-1}$ be the fan graph on $k$ vertices. A graph is said to be $F_k$-free if it does not contain $F_k$ as a subgraph. Yu et al. in [arXiv:2404.03423] conjectured that for $k\geq2$ and $m$ sufficiently large, if $G$ is an $F_{2k+1}$-free or $F_{2k+2}$-free graph, then $\lambda(G)\leq \frac{k-1+\sqrt{4m-k^2+1}}{2}$ and the equality holds if and only if $G\cong K_k\vee\left(\frac{m}{k}-\frac{k-1}{2}\right)K_1$. Recently, Li et al. in [arXiv:2409.15918]
showed that the above conjecture holds for $k\geq 3$. The only left case is for $k=2$, which corresponds to $F_5$ or $F_6$. Since the case of $F_5$ was solved by Yu et al. in [arXiv:2404.03423] and Zhang and Wang in [On the spectral radius of graphs without a gem, Discrete Math. 347 (2024) 114171]. So, one needs only to deal with the case of $F_6$. In this paper, we solve the only left case by determining the maximum spectral radius of $F_6$-free graphs with size $m\geq 88$, and the corresponding extremal graph.
\\[2mm]
\textbf{Keywords:} Spectral radius; $\mathcal{F}$-free graphs; Fan graphs; Extremal graph\\
\textbf{AMS subject classification 2020:} 05C35, 05C50.\\
\end{abstract}

\section{\bf Introduction}

Throughout this paper, we consider only simple and finite undirected graphs. Let $G$ be a simple graph with vertex set $V(G)$ and edge set $E(G)$. We use $|V(G)|=|G|$ and $e(G)$ to denote the order and the size of $G$, respectively.  Let $A(G)$ be the adjacency matrix of a graph $G$. Since $A(G)$ is real symmetric, its eigenvalues are real.  Hence, they can be ordered as $\lambda_1(G)\geq\cdots\geq\lambda_{|G|}(G)$ where $\lambda_1(G)$ is called the spectral radius of $G$ and also denoted by $\lambda(G)$. The neighborhood of a vertex $u\in V(G)$ is denoted by $N_G(u)$. Let $N_G[u] = N_G(u)\cup\{u\}$. The degree of a vertex $u$ in $G$ is denoted by $d_G(u)$. All the subscripts defined here will be omitted if it is clear from the context. As usual, $\Delta(G)$ stand for the maximum degree of $G$. For two subsets $X,Y\subseteq V (G)$, we use $e(X,Y)$ to denote the number of all edges of $G$ with one end vertex in $X$ and the other in $Y$. Particularly, $e(X,X)$ is simplified by $e(X)$. Denote $G[X]$ the subgraph of $G$ induced by $X$. For graph notation and concept undefined here, readers are referred to \cite{Bondy-Murty-2008}.

Let $K_n, C_n$, $P_n$ and $K_{1,n-1}$ be the complete graph, cycle, path and star on $n$ vertices, respectively. Let $K_{1,n-1}+e$ be the graph obtained from $K_{1,n-1}$ by adding one edge within its independent set. An $(a,b)$-double star, denoted by $D_{a,b}$, is the graph obtained by taking an edge and joining one of its end vertices with $a$ vertices and the other end vertex with $b$ vertices which are different from the $a$ vertices. The join of two disjoint graphs $G$ and $H$, denoted by $G\vee H$, is obtained from $G\cup H$ by adding all possible edges between $G$ and $H$.

A graph is said to be $F$-free if it does not contain a subgraph isomorphic to $F$. For a graph $F$ and an integer $m$, let $\mathcal{G}(m, F)$ be the set of $F$-free graphs of size $m$ without isolated vertices. An interesting spectral Tur\'{a}n type problem asks what is the maximum spectral radius of an $F$-free graph with given size $m$, which is also known as Brualdi-Hoffman-Tur\'{a}n type problem \cite{Brualdi-Hoffman-1985}. These extremal spectral graph problems have attracted wide attention recently, see \cite{Fang-You-2023,Li-Lu-2023,Lin-Ning-2021,Liu-Wang-2024,Lu-Lu-2024,Min-Lou-2022,Nikiforov-2002,Nikiforov-2006,Nikiforov-2009,Sun-Li-2023,Wang-2022,Zhai-Lin-2021, Zhai-Shu-2022}.

Let $F_k = K_1\vee P_{k-1}$ be the fan graph on $k$ vertices. Note that $F_3$ is a triangle and $F_4$ is a book on 4 vertices. Nosal \cite{Nosal-1970} showed that $\lambda(G)\leq\sqrt{m}$ for any $F_3$-free graph $G$ with size $m$. In 2021, Nikiforov \cite{Nikiforov} showed that if $G$ is a graph with $m$ edges and $\lambda(G)\geq\sqrt{m}$, then the maximum number
of triangles with a common edge in $G$ is greater than $\frac{1}{12}\sqrt[4]{m}$, unless $G$ is a complete bipartite graph with possibly some isolated vertices. From this, we obtain that the complete bipartite graphs attain the maximum spectral radius when the graphs are $F_4$-free. For $k=5$, Zhang and Wang \cite{Zhang-Wang-2024} and Yu et al. \cite{Yu-Li-2024} respectively considered the extremal problem on spectral radius for $F_5$-free graphs with size $m$. In \cite{Yu-Li-2024}, Yu et al. proposed  the following conjecture on spectral radius for $F_k$-free graphs with given size $m$.
\begin{conjecture}\label{conjecture}
Let $k\geq2$ be fixed and $m$ be sufficiently large. If $G$ is an $F_{2k+1}$-free or $F_{2k+2}$-free graph with $m$ edges, then
$$
\lambda(G)\leq \frac{k-1+\sqrt{4m-k^2+1}}{2},
$$
and equality holds if and only if $G\cong K_k\vee\left(\frac{m}{k}-\frac{k-1}{2}\right)K_1$.
\end{conjecture}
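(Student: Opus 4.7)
Since $F_{2k+1}$ is a subgraph of $F_{2k+2}$, every $F_{2k+1}$-free graph is automatically $F_{2k+2}$-free, so the plan is to prove the bound and the equality case under the (weaker) hypothesis that $G$ is $F_{2k+2}$-free; this handles both parts of the conjecture at once. The overall strategy mirrors what the present paper does for $F_6$ and what Li et al.\ did for $k\geq 3$: pin down the candidate extremal graph, use the Perron eigenvector at a maximum-weight vertex to reduce the problem to a $P_{2k+1}$-free edge count on its neighborhood, apply a sharp $P_{2k+1}$-free bound, and close the residual gap by refined eigenvector accounting.

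First I would verify the candidate. For $G_0 := K_k \vee \bigl(\tfrac{m}{k}-\tfrac{k-1}{2}\bigr)K_1$ with $t := \tfrac{m}{k}-\tfrac{k-1}{2}$, the equitable partition $(V(K_k),V(\overline{K_t}))$ has quotient matrix $\bigl(\begin{smallmatrix}k-1 & t\\ k & 0\end{smallmatrix}\bigr)$, whose largest eigenvalue equals $\frac{k-1+\sqrt{4m-k^2+1}}{2}$. Furthermore $G_0$ is $F_{2k+2}$-free because every vertex's neighborhood is either $K_{k-1}\vee I_t$ or $K_k$, and in each case the longest path contains only $2k-1$ vertices, too short to embed $P_{2k+1}$.

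Now let $G$ be an $F_{2k+2}$-free graph of size $m$ with no isolated vertices, Perron eigenvector $x$ normalized so that $x_{u^*} = \max_v x_v = 1$, and write $\lambda := \lambda(G)$. The standard $A^2$-estimate $\lambda^2 = \sum_{v\sim u^*}\sum_{w\sim v} x_w$ with $x_w\leq 1$ gives
$$
\lambda^2 \leq d(u^*) + 2e(G[N(u^*)]) + e(N(u^*),V\setminus N[u^*]) = m + e(G[N(u^*)]) - e(V\setminus N[u^*]).
$$
Because $G$ is $F_{2k+2}$-free, $G[N(u^*)]$ is $P_{2k+1}$-free, so one can input the Erd\H{o}s--Gallai bound $e(G[N(u^*)])\leq \tfrac{(2k-1)d(u^*)}{2}$. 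This crude substitution is not enough, however, since it is loose at the target neighborhood $K_{k-1}\vee I_{d(u^*)-k+1}$; a sharper structural bound is needed, either a Kopylov-type refinement or a direct stability argument separating the true extremum $K_{k-1}\vee I_{\ast}$ from the Erd\H{o}s--Gallai extremum (disjoint $K_{2k}$'s), which cannot occur inside $N(u^*)$ for a spectrally extremal $G$.

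To reach the target $\lambda^2 - (k-1)\lambda \leq m - \binom{k}{2}$ (which is equivalent to $\lambda\leq \lambda^*$), I would sharpen the estimate $x_w\leq 1$ by singling out $k-1$ ``co-dominant'' neighbors of $u^*$ and using the eigen-equation $\lambda x_v = \sum_{w\sim v} x_w$ to produce the missing $(k-1)\lambda$ term on the left. In the equality case, the sharpened path bound forces $G[N(u^*)] = K_{k-1}\vee I_{d(u^*)-k+1}$; the $k-1$ apex vertices of this join must share eigenvector weight $1$ with $u^*$, hence together form a $K_k$, and the remaining vertices, each having only $k$ neighbors (all inside this $K_k$), are independent, yielding $G\cong G_0$. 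The main obstacle is exactly this gap-closing step: the crude bound overshoots by roughly $(k-1)d(u^*)/2$, and since $d(u^*)$ is of order $m/k\gg \sqrt{m}\sim \lambda$, one cannot afford to ignore that slack. The refinement essentially demands that at most $k-1$ neighbors of $u^*$ carry eigenvector weight close to $1$ while the remaining ones carry weight close to $(k-1)/\lambda$. For $k=2$ this singling-out reduces to identifying a single ``second vertex'' (the route taken in the present paper for $F_6$); for $k\geq 3$ one likely needs either an inductive version of the same argument or a Schur-complement/quotient-matrix comparison with $G_0$, together with separate handling of small-degree regimes and of the borderline case when the path-free bound is almost but not exactly tight.
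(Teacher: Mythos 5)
Your reduction to the $F_{2k+2}$-free case is fine (forbidding the smaller fan is the stronger hypothesis, and the candidate $K_k\vee\bigl(\tfrac{m}{k}-\tfrac{k-1}{2}\bigr)K_1$ is indeed $F_{2k+1}$-free), and your verification of the candidate's spectral radius via the quotient matrix is correct. But what you have written is an outline, not a proof, and the gap sits exactly at the step you yourself flag as "the main obstacle." From $\lambda^2\leq m+e(G[N(u^*)])-e(V\setminus N[u^*])$ you need $\lambda^2-(k-1)\lambda\leq m-\binom{k}{2}$, and you concede that the Erd\H{o}s--Gallai bound for $P_{2k+1}$-free graphs overshoots by order $(k-1)d(u^*)/2$, which is of order $m$, far beyond the $(k-1)\lambda=O(\sqrt{m})$ you are allowed to recover. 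The proposed repairs --- a "Kopylov-type refinement," a stability argument ruling out disjoint $K_{2k}$'s inside $N(u^*)$, and the identification of $k-1$ "co-dominant" neighbors carrying eigenvector weight near $1$ --- are stated as desiderata, not carried out; in particular the claim that the Erd\H{o}s--Gallai extremal configuration cannot occur in $N(u^*)$ for a spectrally extremal $G$ is precisely the kind of assertion that requires the detailed eigenvector bookkeeping you have skipped. The equality analysis is likewise asserted rather than derived.

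For comparison, the paper does not attack the conjecture with a global $P_{2k+1}$-free edge bound at all. For the one open case it settles ($k=2$, i.e.\ $F_6$), it classifies the nontrivial components $H$ of $G^*[N(u^*)]$ into a short list (stars, double stars, $K_{1,r}+e$, $C_4$, $K_4-e$, $K_4$; Lemma \ref{component cases}), introduces $\gamma(H)=\sum_{u\in V(H)}(d_H(u)-1)\frac{x_u}{x_{u^*}}-e(H)$, proves $\gamma(H)\leq 0$ and $e(W)\leq 1$, and then eliminates $K_4$, $K_4-e$, $C_4$ and $K_{1,r}+e$ one by one using local eigen-equation estimates and the Wu--Xiao edge-rewiring lemma, before pinning down $K_2\vee\frac{m-1}{2}K_1$. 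None of this component-level analysis (or any substitute for it, such as reproducing the argument of Li et al.\ for $k\geq 3$) appears in your proposal beyond a passing reference, so the conjecture is not proved by what you have written; you would need either to execute the refined neighborhood analysis in full for general $k$, or, as the paper does, to combine the known results for $k\geq3$ and $F_5$ with a complete treatment of $F_6$.
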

Recently, Li et al. \cite{Li-Zhao-2024} gave a unified approach to resolve Conjecture \ref{conjecture} for $k\geq3$. 
In order to resolve Conjecture \ref{conjecture} completely, one only need to consider $F_{6}$-free graphs. Motivated by it, we will show that Conjecture \ref{conjecture} holds for $F_{6}$-free graphs.

\begin{thm}\label{main theorem}
Let $G$ be an $F_6$-free graph with $m\geq88$ edges. Then $\lambda(G)\leq\frac{1+\sqrt{4m-3}}{2}$ and equality holds if and only if $G\cong K_2\vee\frac{m-1}{2}K_1$.
\end{thm}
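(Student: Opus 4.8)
The plan is to let $G$ be an $F_6$-free graph with $m\ge 88$ edges and $\lambda:=\lambda(G)\ge\theta$, where $\theta:=\frac{1+\sqrt{4m-3}}{2}$ is the larger root of $\theta^2-\theta-(m-1)=0$, and to prove that this forces $G\cong K_2\vee\frac{m-1}{2}K_1$; since the latter graph has spectral radius exactly $\theta$, this single implication yields the whole theorem. We may assume $G$ is connected (otherwise pass to the component realizing $\lambda$ and delete isolated vertices). Let $x$ be the Perron vector of $A(G)$, scaled so that $\max_v x_v=x_u=1$, and put $R:=V(G)\setminus N[u]$. From $\lambda=\lambda x_u=\sum_{v\in N(u)}x_v\le d(u)$ we get at once $d(u)\ge\lambda\ge\theta>\sqrt{m-1}$. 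The workhorse is the walk identity obtained by expanding $\lambda^2 x_u$:
$$\lambda^2=d(u)+\Sigma_1+\Sigma_2,\qquad \Sigma_1:=\sum_{v\in N(u)}|N(v)\cap N(u)|\,x_v,\quad \Sigma_2:=\sum_{v\in N(u)}\ \sum_{w\in N(v)\cap R}x_w,$$
to be used together with the edge count $m=d(u)+e(N(u))+e(N(u),R)+e(R)$.

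First I would extract structure from $F_6$-freeness. Since $F_6=K_1\vee P_5$, for every vertex $w$ the graph $G[N(w)]$ contains no $P_5$; moreover, for every edge $uw\in E(G)$ the common neighbourhood $N(u)\cap N(w)$ spans at most one edge, because two disjoint edges $ab,cd$ there would produce a $P_5$ on $\{a,b,u,c,d\}$ with edge set $\{ab,bu,uc,cd\}$ inside $N(w)$, i.e. an $F_6$ centred at $w$. Applying this with $w$ running over $N(u)$ shows that $H:=G[N(u)]$ is a $P_5$-free graph in which every vertex-neighbourhood spans at most one edge. A short case analysis of connected such graphs shows that each component of $H$ is a tree (a star or a double star), or a triangle with pendant edges attached at a single vertex, or one of $C_4$ and the paw. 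Consequently $e(N(u))\le d(u)$ (beating the Erd\H{o}s--Gallai bound $\tfrac32 d(u)$), and, separating in each component the at most two vertices of $H$-degree $\ge3$ (whose degrees sum to at most the order of that component) from the vertices of $H$-degree $\le2$, one obtains $\Sigma_1\le d(u)+2\lambda$. Feeding this into the identity gives $\lambda^2\le 2d(u)+2\lambda+\Sigma_2$.

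The crux is to control the ``outside'' contribution $\Sigma_2$ (and $R$ itself) well enough to turn this into a strong lower bound on $d(u)$: the naive estimate $\Sigma_2\le e(N(u),R)\le m-d(u)$ is circular. Here one again uses $F_6$-freeness --- now for neighbourhoods of vertices of $R$ and across the cut --- to show that the structure between $N(u)$ and $R$ and inside $R$ is severely restricted; combined, if need be, with an edge-switching/extremality argument (moving an edge incident to a low-weight vertex onto $u$ strictly increases $\lambda$ unless it would create an $F_6$), this should rule out a second vertex of large eigenvector weight living in $R$ and force $e(R)$ bounded and $\Sigma_2$ small. The outcome is $d(u)\ge\frac{m-1}{2}-O(\sqrt m)$; then a second pass --- knowing now that $d(u)$ is within $O(\sqrt m)$ of $\frac m2$, hence the entries $x_v$ for $v\in N(u)$ are of size $O(1/\sqrt m)$ and $e(N(u))$ is about $\frac m2$ --- bootstraps to $d(u)\ge\frac m2-c$ for an absolute constant $c$, with $e(R)=0$ and $e(N(u),R)=0$, i.e. $u$ is a dominating vertex. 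At this point $H=G-u$ is a $P_5$-free graph of the special type above with $m-d(u)=\frac m2-O(1)$ edges, and the eigenvalue equation at $u$ together with the bound on $\Sigma_1$ pins $H$ down, up to a bounded number of extra edges and isolated vertices, as a star $K_{1,\,d(u)-1}$; its centre $u'$ is a second dominating vertex with $d(u')$ also close to $\frac m2$.

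Finally, a stability-to-exact step finishes the job: among $F_6$-free graphs within bounded edit distance of $K_2\vee\frac{m-1}{2}K_1$, every deviation --- an extra triangle or an edge not incident to one of the two hubs, a missing hub--hub or hub--leaf edge, a leftover isolated vertex, or simply $m$ even --- can be shown, by comparing spectral radii through the eigenvalue equations (or by evaluating the Rayleigh quotient of the target's Perron vector), to drop the spectral radius strictly below $\theta$; the hypothesis $m\ge 88$ is what makes the $O(\sqrt m)$ and $O(1)$ slacks accumulated above smaller than these spectral gaps. Hence $G\cong K_2\vee\frac{m-1}{2}K_1$. I expect the main obstacle to be the third paragraph --- getting from the easy bound $d(u)>\sqrt{m-1}$ all the way to $d(u)=\frac m2-O(1)$ while simultaneously killing $e(R)$ and $\Sigma_2$ --- and, secondarily, the length of the case analysis in the final step; indeed it is precisely the richness of small $F_6$-free configurations that kept $k=2$ out of the uniform treatment that settled $k\ge3$.
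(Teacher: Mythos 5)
Your proposal shares the paper's starting point (take an extremal vertex $u$, Perron vector, walk identity for $\lambda^2 x_u$, classify the components of $G[N(u)]$ via $P_5$-freeness), but there are both a concrete error and a genuine gap that together mean it does not amount to a proof. The concrete error is in the classification of the components of $H=G[N(u)]$. Your deduction that for an edge $uw$ the set $N(u)\cap N(w)$ ``spans at most one edge'' does not follow from the $F_6$-free hypothesis: the $P_5$-construction you describe only forbids two \emph{disjoint} edges, so $N(u)\cap N(w)$ may span a triangle or a star. Consequently $K_4$ and $K_4-e$ are perfectly admissible components of $H$ (they have no $P_5$ and create no $F_6$ as isolated components of $N(u)$), and indeed the paper's Lemma~\ref{component cases} explicitly includes them. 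This breaks your estimate $\Sigma_1\le d(u)+2\lambda$: for a $K_4$ component all four vertices have $H$-degree $3$, and for $K_4-e$ the two degree-$3$ vertices have degree sum $6>4$, so ``at most two vertices of degree $\ge3$ whose degrees sum to at most the order of the component'' is false, as is your claimed bound $e(N(u))\le d(u)$. A substantial part of the paper (Lemmas~\ref{no K4 lemma} and~\ref{no K4-e lemma}) is devoted precisely to ruling these components out by separate, delicate eigenvector-weight arguments, not by the classification lemma itself.

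The larger gap is that your third and fourth paragraphs — controlling $\Sigma_2$ and $e(R)$, bootstrapping $d(u)$ to $\frac m2-O(1)$, and the final stability-to-exact step — are stated as intentions (``should rule out'', ``if need be'', ``I expect the main obstacle to be the third paragraph'') rather than arguments, and you yourself identify them as the crux. The paper's route is materially different here: it introduces the per-component quantity $\gamma(H)=\sum_{v\in V(H)}(d_H(v)-1)\frac{x_v}{x_{u^\ast}}-e(H)$ and the master inequality $e(W)\le\sum_H\gamma(H)-\sum_{u\in U_0}\frac{x_u}{x_{u^\ast}}+1$, then proves $\gamma(H)\le 0$ for every component, then eliminates $K_4$, $K_4-e$, $C_4$ and $K_{1,r}+e$ in turn (each via a tailored edge-switching or eigen-equation argument), forces $e(W)=0$, and only then pins $G^\ast$ down to $K_1\vee K_{1,r}$. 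Without that per-component bookkeeping (or some equally sharp replacement), the ``naive estimate is circular'' problem you correctly identify in paragraph three is not actually resolved by your outline.
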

\section{Preliminaries}
In this section, we present some preliminary results, which play an important role in the subsequent sections.
\begin{lem}\cite{Nikiforov-2002,Nosal-1970}\label{lem:Nikiforov}
If $G$ is a $K_3$-free graph with size $m$, then $\lambda(G)\leq\sqrt{m}$ and equality holds if and only if $G$ is a complete bipartite graph.
\end{lem}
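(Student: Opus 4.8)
The plan is to follow the standard spectral-extremal template, using the Perron eigenvector to localize the structure of an $F_6$-free graph $G$ of size $m\ge 88$ that attains $\lambda := \lambda(G) \ge \frac{1+\sqrt{4m-3}}{2}$, and to show this forces $G \cong K_2 \vee \frac{m-1}{2}K_1$. Let $\mathbf{x}$ be the Perron vector of $A(G)$ normalized so that $\max_v x_v = x_u = 1$ for some vertex $u$. From the eigenvalue equation $\lambda^2 x_u = \sum_{v\in N(u)} \sum_{w\in N(v)} x_w$ one reads off, after bounding walks of length two from $u$, the basic inequality $\lambda^2 \le d(u) + \sum_{v\in N(u)} d(v) - (\text{corrections})$; since $\lambda^2 \ge \lambda^2 \ge m - 1 + \lambda \cdot 0 \ge m - 1$ roughly, $u$ must have large degree and most $\mathbf{x}$-weight must sit on $N[u]$. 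More precisely, since $\lambda \ge \frac{1+\sqrt{4m-3}}{2}$ gives $\lambda^2 - \lambda = \frac{m-1}{1}$ at equality and $\lambda \ge \sqrt{m-1}$ in general, a counting of edges incident to $N[u]$ versus edges outside shows $e(V \setminus N[u])$ is small and $d(u) \ge \lambda - o(\lambda)$.

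The core of the argument is the $F_6 = K_1 \vee P_5$ forbidden-subgraph condition. I would first observe that $G[N(u)]$ is $P_5$-free (else $u$ together with that $P_5$ gives an $F_6$), so by the structure of $P_5$-free graphs each component of $G[N(u)]$ has bounded "depth"; combined with $F_6$-freeness applied at other high-degree vertices, one shows $G[N(u)]$ has very few edges — essentially it is a disjoint union of small pieces, the extremal case being a single edge. The second ingredient controls vertices outside $N[u]$: using that $\lambda x_v = \sum_{w\in N(v)} x_w$ and $x_v$ small for $v \notin N[u]$, together with $F_6$-freeness linking $u$'s neighborhood to the rest, I would bound $\sum_{v \notin N[u]} x_v$ and the number of edges not counted by $d(u)$. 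Plugging these bounds back into a refined inequality of the form $\lambda^2 + \lambda \le \text{(edges seen from } u\text{)} + \text{(slack terms)}$ and comparing with the hypothesis $\lambda^2 - \lambda \ge m-1$ forces every slack term to vanish: $N(u)$ spans exactly one edge, every other vertex is adjacent to exactly that edge's two endpoints and nothing else, and there are no edges outside. This is precisely $K_2 \vee \frac{m-1}{2}K_1$.

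For the equality characterization, once the structure is pinned down one checks directly that $K_2 \vee t K_1$ has $m = 2t+1$ edges and characteristic computation gives $\lambda = \frac{1+\sqrt{4m-3}}{2}$ (the nonzero eigenvalues come from the quotient matrix $\begin{pmatrix} 1 & t \\ 2 & 0 \end{pmatrix}$), and that any deviation strictly decreases $\lambda$, which is already implicit in the chain of inequalities above being strict unless $G$ is exactly this graph. The threshold $m \ge 88$ will emerge from making the $o(\lambda)$ error terms smaller than the integer gaps in the edge counts; I would keep the constants explicit throughout the degree and weight estimates so that the final contradiction is quantitative.

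The main obstacle I anticipate is controlling $G[N(u)]$ tightly enough: $P_5$-freeness alone permits fairly rich graphs (e.g. large complete bipartite pieces, many triangles), so the bound $e(G[N(u)]) \le 1$ in the extremal regime cannot come from that condition in isolation. It must be obtained by a more global argument — for instance, showing that any extra edge in $N(u)$ either creates an $F_6$ through a second well-chosen vertex, or lets one exhibit a graph of the same size with strictly larger spectral radius, contradicting maximality. Handling the interaction between "few edges inside $N(u)$", "little weight outside $N[u]$", and "$d(u)$ nearly $\lambda$" simultaneously, rather than sequentially, is where the delicate case analysis will lie, and is presumably why the problem required a dedicated paper rather than following from the $k \ge 3$ argument of Li et al.
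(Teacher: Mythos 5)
Your proposal does not address the statement you were asked to prove. The statement is Lemma \ref{lem:Nikiforov}, the classical Nosal--Nikiforov bound: for a \emph{triangle-free} graph $G$ with $m$ edges, $\lambda(G)\leq\sqrt{m}$, with equality precisely for complete bipartite graphs. What you have written instead is a strategy sketch for the paper's \emph{main theorem} (the $F_6$-free problem with $m\geq 88$ and extremal graph $K_2\vee\frac{m-1}{2}K_1$). Nothing in your text engages with $K_3$-freeness, the bound $\sqrt{m}$, or the complete bipartite equality case, so as a proof of the lemma it is vacuous. (For the record, the paper itself does not prove this lemma either; it is imported from Nosal and Nikiforov as a known result and is used only once, at the very end, to rule out the case where $G^\ast[U]$ has no non-trivial component.)

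If you do want to prove the lemma, the standard short argument is the one your first paragraph gestures at, but applied correctly: normalize the Perron vector so that $x_u=1$ is maximal, write $\lambda^2=\lambda^2 x_u=\sum_{v\in N(u)}\sum_{w\in N(v)}x_w\leq\sum_{v\in N(u)}d(v)$, and observe that triangle-freeness makes $N(u)$ an independent set, so $\sum_{v\in N(u)}d(v)$ counts only edges with exactly one endpoint in $N(u)$ and is therefore at most $m$. Tracing through the equality conditions ($x_w=1$ for all $w$ reachable in two steps from $u$, every edge meeting $N(u)$, connectivity of the relevant component) yields that $G$ is complete bipartite. Your sketch, by contrast, assumes $\lambda\geq\frac{1+\sqrt{4m-3}}{2}$ and forbids $F_6$ rather than $K_3$ --- hypotheses that belong to a different theorem --- so even as a template it cannot be salvaged for this statement without being rewritten from scratch.
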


For a connected graph $G$, by Perron-Frobenius theorem \cite{Cvetkovic-Rowlinson-2010}, we know that there exists a positive unit eigenvector corresponding to $\lambda(G)$, which is called the Perron vector of $G$.

\begin{lem}\cite{Wu-Xiao-2005}\label{lem:Wu-Xiao-2005}
Let $u$ and $v$ be two vertices of the connected graph $G$. Suppose $v_1,v_2,\ldots,v_s$ $(1\leq s\leq d_G(v))$ are some vertices of $N_G(v)\setminus N_G(u)$ and $\mathbf{x}$ is the Perron vector of $G$ with $x_w$ corresponding to the vertex $w\in V(G)$. Let $G'=G-\{vv_i|1\leq i\leq s\}+\{uv_i|1\leq i\leq s\}$. If $x_u\geq x_v$, then $\lambda(G')>\lambda(G)$.
\end{lem}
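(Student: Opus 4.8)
The plan is to combine the Rayleigh quotient characterization of $\lambda$ with the strict positivity of the Perron vector of a connected graph. Let $\mathbf{x}$ be the unit Perron vector of $G$, so that $\mathbf{x}>0$ coordinatewise and $\mathbf{x}^{T}A(G)\mathbf{x}=\lambda(G)$. Since $G'$ arises from $G$ by deleting the $s$ edges $vv_i$ and adding the $s$ edges $uv_i$ --- and no loop or multi-edge is created, because $v_i\in N_G(v)\setminus N_G(u)$ forces $v_i\neq v$ and $v_i\notin N_G(u)$ (and one may assume $v_i\neq u$) --- expanding $\mathbf{x}^{T}A(G')\mathbf{x}$ termwise yields
$$
\mathbf{x}^{T}A(G')\mathbf{x}=\lambda(G)-2\sum_{i=1}^{s}x_v x_{v_i}+2\sum_{i=1}^{s}x_u x_{v_i}=\lambda(G)+2(x_u-x_v)\sum_{i=1}^{s}x_{v_i},
$$
and the Rayleigh principle gives $\lambda(G')\geq\mathbf{x}^{T}A(G')\mathbf{x}$.

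First I would dispose of the case $x_u>x_v$: since every coordinate of $\mathbf{x}$ is positive and $s\geq1$, the sum $\sum_{i=1}^{s}x_{v_i}$ is positive, so $\lambda(G')\geq\mathbf{x}^{T}A(G')\mathbf{x}>\lambda(G)$ at once. It remains to handle $x_u=x_v$, where the identity above only gives $\mathbf{x}^{T}A(G')\mathbf{x}=\lambda(G)$, hence $\lambda(G')\geq\lambda(G)$, and strictness must be extracted by a contradiction argument. If $\lambda(G')=\lambda(G)$, then the unit vector $\mathbf{x}$ attains the maximum of the Rayleigh quotient of the symmetric matrix $A(G')$, so $\mathbf{x}$ is an eigenvector of $A(G')$ for its largest eigenvalue; thus $A(G')\mathbf{x}=\lambda(G)\mathbf{x}=A(G)\mathbf{x}$. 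Comparing the coordinate indexed by $v$, and using that in $G'$ the neighborhood of $v$ is exactly $N_G(v)\setminus\{v_1,\dots,v_s\}$ with no new neighbor gained, one obtains $(A(G')\mathbf{x})_v=(A(G)\mathbf{x})_v-\sum_{i=1}^{s}x_{v_i}$, whence $\sum_{i=1}^{s}x_{v_i}=0$ --- contradicting $\mathbf{x}>0$. Therefore $\lambda(G')>\lambda(G)$ in all cases.

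The computation is short, so the only point that needs genuine care is the bookkeeping in the equality case: one must check that $v\notin\{u,v_1,\dots,v_s\}$, that $v$ loses precisely the neighbors $v_1,\dots,v_s$ and acquires none, and that $A(G')$ is indeed the adjacency matrix of a simple graph, so that the coordinatewise identity $A(G')\mathbf{x}=A(G)\mathbf{x}$ can be read off at $v$ exactly. I do not anticipate any real obstacle beyond this; in particular connectivity of $G'$ is not required, because the equality case of the Rayleigh principle already forces $\mathbf{x}$ to be an eigenvector of $A(G')$ regardless.
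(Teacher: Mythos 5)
Your proof is correct. The paper states this lemma as a citation to Wu--Xiao--Hong without reproducing a proof, and your argument (Rayleigh quotient comparison giving $\mathbf{x}^{T}A(G')\mathbf{x}-\mathbf{x}^{T}A(G)\mathbf{x}=2(x_u-x_v)\sum_i x_{v_i}\ge 0$, then in the equality case forcing $\mathbf{x}$ to be an eigenvector of $A(G')$ and reading off coordinate $v$ to get $\sum_i x_{v_i}=0$, a contradiction) is exactly the standard proof of that cited result; your side remarks about excluding the degenerate possibility $v_i=u$ and about not needing connectivity of $G'$ are also correct.
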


A cut vertex of a graph is a vertex whose deletion increases the number of components. A graph is called 2-connected, if it is a connected graph without cut vertices. Let $\mathbf{x}$ be the Perron vector of $G$ with coordinate $x_u$ corresponding to the vertex $u\in V(G)$ and $u^\ast$ be a vertex satisfying $x_{u^\ast}=\max\{x_u|u\in V(G)\}$, which is said to be an extremal vertex.

\begin{lem}\cite{Zhai-Lin-2021}\label{lem:Zhai-Lin-2021}
Let $G$ be a graph in $\mathcal{G}(m,F)$ with the maximum spectral radius. If $F$ is a 2-connected graph and $u^\ast$ is an extremal vertex of $G$, then $G$ is connected and $d(u)\geq2$ for any $u\in V(G)\setminus N[u^\ast]$.
\end{lem}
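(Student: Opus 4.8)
The plan is to prove the two assertions separately, each time assuming the conclusion fails and then modifying $G$ into another member of $\mathcal{G}(m,F)$ with strictly larger spectral radius, contradicting the choice of $G$. The feature that makes every such modification safe is that $F$, being $2$-connected, has minimum degree at least $2$; hence no copy of $F$ can contain a vertex of degree at most $1$, so creating or relocating pendant edges can never produce a copy of $F$.

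First I would show $G$ is connected. Suppose not, and let $G_1$ be the component containing the extremal vertex $u^\ast$. Since the Perron vector $\mathbf{x}$ is nonnegative and nonzero, $x_{u^\ast}>0$; restricting $A(G)\mathbf{x}=\lambda(G)\mathbf{x}$ to $G_1$ shows $\mathbf{x}|_{G_1}$ is a nonzero eigenvector of $A(G_1)$ for $\lambda(G)$, so $\lambda(G_1)=\lambda(G)$. As $G$ has no isolated vertex, each component other than $G_1$ contains an edge, so the number $r:=m-e(G_1)$ of edges lying outside $G_1$ is at least $1$. Let $G'$ be obtained from $G_1$ by attaching $r$ new pendant vertices at $u^\ast$. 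Then $G'\in\mathcal{G}(m,F)$: it has size $m$, no isolated vertex, and is $F$-free because any copy of $F$ would avoid the new degree-$1$ vertices and hence lie in $G_1$, which is $F$-free. Since $G'$ is connected and properly contains $G_1$, the Perron--Frobenius theorem \cite{Cvetkovic-Rowlinson-2010} gives $\lambda(G')>\lambda(G_1)=\lambda(G)$, a contradiction. Hence $G$ is connected.

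Next I would handle the degree condition. Let $u\in V(G)\setminus N[u^\ast]$ and suppose $d(u)\le 1$. Connectivity of $G$ forbids $d(u)=0$, so $d(u)=1$; let $w$ be the neighbour of $u$. Then $w\ne u^\ast$, since otherwise $u\in N(u^\ast)$; and $d(w)\ge 2$, since otherwise $uw$ would be an entire component, forcing $G\cong K_2$, in which case $V(G)\setminus N[u^\ast]=\emptyset$ and there is nothing to prove. Now $u\in N_G(w)\setminus N_G(u^\ast)$, and $x_{u^\ast}\ge x_w$ because $u^\ast$ is extremal, so Lemma \ref{lem:Wu-Xiao-2005} (with $v=w$, the single vertex $v_1=u$, and $u^\ast$ playing the role of ``$u$'') applies to $G'=G-uw+uu^\ast$ and gives $\lambda(G')>\lambda(G)$. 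But $G'\in\mathcal{G}(m,F)$: it has size $m$, no isolated vertex ($w$ keeps another neighbour and $u$ is now joined to $u^\ast$), and is $F$-free because any copy of $F$ in $G'$ avoids the degree-$1$ vertex $u$ and hence lies in $G'-u=G-u\subseteq G$. This contradicts the maximality of $\lambda(G)$, so $d(u)\ge 2$.

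The routine parts are the verifications that the spectral radius strictly increases, which reduce to Lemma \ref{lem:Wu-Xiao-2005} and to the elementary monotonicity of the spectral radius of a connected graph under adding edges. The delicate point — and the one I would be most careful about — is checking that the two new graphs still belong to $\mathcal{G}(m,F)$, that is, that they remain $F$-free and without isolated vertices; this is exactly where the $2$-connectedness of $F$ and the connectivity of $G$ are used, to exclude the degenerate configurations (a $K_2$-component, or a degree-$1$ vertex whose only neighbour also has degree $1$).
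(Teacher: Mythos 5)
Your proof is correct; note that the paper itself gives no proof of this lemma but simply cites it from Zhai--Lin--Shu, and your argument (replacing the edges outside the component of $u^\ast$ by pendant edges at $u^\ast$ to force connectivity, then rerouting a pendant edge to $u^\ast$ via Lemma \ref{lem:Wu-Xiao-2005} for the degree bound) is exactly the standard one from that source. The key observation you isolate---that $2$-connectedness of $F$ gives $\delta(F)\geq 2$, so degree-one vertices can never lie in a copy of $F$ and the modified graphs stay in $\mathcal{G}(m,F)$---is precisely the point of the hypothesis.
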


The following result is mentioned in \cite{Li-Zhao-2024} which is easy to get.
\begin{lem}\cite{Li-Zhao-2024}\label{lem:Li-Zhao-2024}
Let $G$ be a graph in $\mathcal{G}(m,F_k)$. Then for all $u\in V(G)$, the graph $G[N(u)]$ is $P_{k-1}$-free.
\end{lem}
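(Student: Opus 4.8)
The statement follows immediately from unpacking the definition of the forbidden graph $F_k = K_1 \vee P_{k-1}$ together with the fact that "$F_k$-free" refers to (not necessarily induced) subgraphs. I would argue by contradiction: suppose that for some vertex $u \in V(G)$ the induced subgraph $G[N(u)]$ contains a copy of $P_{k-1}$, say on the vertex sequence $v_1 v_2 \cdots v_{k-1}$ with $v_i v_{i+1} \in E(G)$ for $1 \le i \le k-2$ and each $v_i \in N(u)$.

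Since every $v_i$ lies in $N(u)$, by definition $u v_i \in E(G)$ for all $i$. Hence the subgraph $H$ of $G$ with vertex set $\{u, v_1, \dots, v_{k-1}\}$ and edge set $\{v_i v_{i+1} : 1 \le i \le k-2\} \cup \{u v_i : 1 \le i \le k-1\}$ is a subgraph of $G$, and $H$ is by construction isomorphic to $K_1 \vee P_{k-1} = F_k$ (the vertex $u$ playing the role of the apex $K_1$, the path $v_1 \cdots v_{k-1}$ playing the role of $P_{k-1}$). This contradicts $G \in \mathcal{G}(m, F_k)$, i.e.\ that $G$ is $F_k$-free. Therefore no such $u$ exists, and $G[N(u)]$ is $P_{k-1}$-free for every $u \in V(G)$.

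There is essentially no obstacle here: the only point worth emphasizing is that we do not need $G[N(u)]$ to \emph{induce} a $P_{k-1}$ — merely containing $P_{k-1}$ as a subgraph suffices, because adding further edges among $\{v_1,\dots,v_{k-1}\}$ or involving $u$ only creates a supergraph of $F_k$, which still witnesses $F_k \subseteq G$. Conversely, $N(u)$ may of course be empty or too small to contain any $P_{k-1}$ (e.g.\ if $|N(u)| < k-1$), in which case the conclusion is vacuous.
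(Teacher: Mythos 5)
Your proof is correct, and it is the standard one-line argument: a $P_{k-1}$ in $G[N(u)]$ together with the apex $u$ yields $F_k = K_1\vee P_{k-1}$ as a subgraph. The paper itself omits the proof (citing it as "easy to get" from \cite{Li-Zhao-2024}), so there is nothing to compare beyond noting that your argument is exactly the intended one.
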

\section{Proof of Theorem \ref{main theorem}}
Let $G^\ast$ be a graph in $\mathcal{G}(m,F_6)$ with the maximum spectral radius. By Lemma \ref{lem:Zhai-Lin-2021}, we have $G^\ast$ is connected. Let $\lambda=\lambda(G^\ast)$ and $\mathbf{x}$ be the Perron vector of $G^\ast$ with coordinate $x_v$ corresponding to the vertex $v\in V(G^\ast)$. Assume that $u^\ast$ is an extremal vertex of $G^\ast$. Set $U=N_{G^\ast}(u^\ast)$ and $W=V(G^\ast)\setminus N_{G^\ast}[u^\ast]$.  Let $U_0$ be the isolated vertices of the induced subgraph $G^\ast[U]$, and $U_+=U\setminus U_0$ be the vertices of $U$ with degree at least one in $G^\ast[U]$. Let $W_H=N_W(V(H))$ for any subset $H$ of $G^\ast[U]$ and $W_0=\{w|d_W(w)=0\}$.

Note that $\lambda\left(K_2\vee\frac{m-1}{2}K_1\right)=\frac{1+\sqrt{4m-3}}{2}$ and $K_2\vee\frac{m-1}{2}K_1$ is $F_6$-free, we have
$$
\lambda(G^\ast)\geq\lambda\left(K_2\vee\frac{m-1}{2}K_1\right)=\frac{1+\sqrt{4m-3}}{2}.
$$
Hence, $\lambda^2-\lambda\geq m-1$. Furthermore, Since $m\geq88$, we can get $\lambda=\lambda(G^\ast)>\frac{49}{5}$.
Since $\lambda(G^\ast)\mathbf{x}=A(G^\ast)\mathbf{x}$, we have
$$
\lambda x_{u^\ast}=\sum_{u\in U_+}x_u+\sum_{u\in U_0}x_u.
$$
Furthermore, $\mathbf{x}$ is also an eigenvector of $A^2(G^\ast)$ corresponding to $\lambda^2(G^\ast)$. It follows that
\begin{align*}
\lambda^2x_{u^\ast}&=|U|x_{u^\ast}+\sum_{u\in U_+}d_U(u)x_u+\sum_{w\in W}d_U(w)x_w.
\end{align*}
Therefore,
\begin{align*}
(\lambda^2-\lambda)x_{u^\ast}=|U|x_{u^\ast}+\sum_{u\in U_+}(d_U(u)-1)x_u+\sum_{w\in W}d_U(w)x_w-\sum_{u\in U_0}x_u.
\end{align*}
Since $\lambda^2-\lambda\geq m-1=|U|+e(U_+)+e(U,W)+e(W)-1$, we have
$$
\sum_{u\in U_+}(d_U(u)-1)x_u+\sum_{w\in W}d_U(w)x_w\geq\left(e(U_+)+e(U,W)+e(W)+\sum_{u\in U_0}\frac{x_u}{x_{u^\ast}}-1\right)x_{u^\ast}.
$$
That is,
\begin{align*}
\sum_{u\in U_+}(d_U(u)-1)\frac{x_u}{x_{u^\ast}}+\sum_{w\in W}d_U(w)\frac{x_w}{x_{u^\ast}}\geq e(U_+)+e(U,W)+e(W)+\sum_{u\in U_0}\frac{x_u}{x_{u^\ast}}-1.\tag{1}
\end{align*}

\begin{lem}\label{edge number of U}
$e(U)\geq4$.
\end{lem}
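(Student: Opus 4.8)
The plan is to read the bound straight off inequality (1) using two elementary estimates. For the first estimate, recall that $u^\ast$ is an extremal vertex, so $x_v\le x_{u^\ast}$ for every $v\in V(G^\ast)$, i.e. $\frac{x_v}{x_{u^\ast}}\le 1$. Since each coefficient $d_U(u)-1$ ($u\in U_+$) and $d_U(w)$ ($w\in W$) is nonnegative, the left-hand side of (1) satisfies
$$\sum_{u\in U_+}(d_U(u)-1)\frac{x_u}{x_{u^\ast}}+\sum_{w\in W}d_U(w)\frac{x_w}{x_{u^\ast}}\ \le\ \sum_{u\in U_+}(d_U(u)-1)+\sum_{w\in W}d_U(w).$$
Because $U_0$ consists of the isolated vertices of $G^\ast[U]$, every edge of $G^\ast[U]$ has both endpoints in $U_+$; hence $e(U)=e(U_+)$ and $\sum_{u\in U_+}d_U(u)=2e(U)$, while $\sum_{w\in W}d_U(w)=e(U,W)$. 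Thus the left-hand side of (1) is at most $2e(U)-|U_+|+e(U,W)$.

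For the second estimate I would control the term $\sum_{u\in U_0}\frac{x_u}{x_{u^\ast}}$ appearing on the right-hand side of (1) from below. Using the eigenvalue equation at $u^\ast$, namely $\sum_{u\in U}x_u=\lambda x_{u^\ast}$, together with $x_u\le x_{u^\ast}$ for $u\in U_+$, one gets $\sum_{u\in U_0}x_u=\lambda x_{u^\ast}-\sum_{u\in U_+}x_u\ge(\lambda-|U_+|)x_{u^\ast}$, so $\sum_{u\in U_0}\frac{x_u}{x_{u^\ast}}\ge\lambda-|U_+|$. Substituting both estimates into (1) gives
$$2e(U)-|U_+|+e(U,W)\ \ge\ e(U)+e(U,W)+e(W)-1+\lambda-|U_+|,$$
and after cancelling $e(U,W)$ and $|U_+|$ this simplifies to $e(U)\ge e(W)+\lambda-1\ge\lambda-1$.

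Finally, I would invoke the bound $\lambda>\frac{49}{5}$ already established from $m\ge 88$, which yields $e(U)>\frac{49}{5}-1=\frac{44}{5}$, and since $e(U)$ is an integer we conclude $e(U)\ge 5>4$ (in fact the argument gives $e(U)\ge 9$, so the stated bound is very comfortable). I do not expect a real obstacle in this lemma; the only points requiring care are that the coefficients multiplying $\frac{x_v}{x_{u^\ast}}$ must be nonnegative for the replacement by $1$ to be an upper bound (this is why $U_+$, where $d_U(u)\ge 1$, is separated from $U_0$), and that the identity $e(U)=e(U_+)$ is exactly the definition of $U_0$. The large slack in the final inequality reflects that a cruder argument already suffices, which is presumably why only $e(U)\ge 4$ is recorded for use in the sequel.
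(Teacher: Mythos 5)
Your proof is correct, and the key estimates check out: the left-hand side of (1) is at most $\sum_{u\in U_+}(d_U(u)-1)+\sum_{w\in W}d_U(w)=2e(U)-|U_+|+e(U,W)$ because each coefficient is nonnegative and $x_v\le x_{u^\ast}$, and the lower bound $\sum_{u\in U_0}\frac{x_u}{x_{u^\ast}}\ge\lambda-|U_+|$ follows from $\lambda x_{u^\ast}=\sum_{u\in U}x_u$ together with $\sum_{u\in U_+}x_u\le|U_+|x_{u^\ast}$. Plugging these into (1), the $|U_+|$ and $e(U,W)$ terms cancel and you land on $e(U)\ge e(W)+\lambda-1>\frac{44}{5}$, hence $e(U)\ge 9$. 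This is genuinely a different route from the paper's: the paper bypasses (1) entirely, working directly with the $A^2$ relation $\lambda^2 x_{u^\ast}=|U|x_{u^\ast}+\sum_{u\in U_+}d_U(u)x_u+\sum_{w\in W}d_U(w)x_w$ and the weaker consequence $\lambda^2>m+3$ of $m\ge 88$, which gives $m+3<|U|+2e(U)+e(U,W)$ and hence $e(U)>e(W)+3\ge 3$. Your route instead passes through $\lambda^2-\lambda\ge m-1$ as packaged in (1); the price is that (1) carries the extra term $-\sum_{u\in U_0}\frac{x_u}{x_{u^\ast}}$, which you then have to control from below — and it is a pleasant feature of your estimate that the $|U_+|$ introduced there cancels exactly against the $|U_+|$ from bounding $\sum_{u\in U_+}(d_U(u)-1)$. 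The benefit is a sharper numerical conclusion ($e(U)\ge 9$ rather than $\ge 4$), though the paper's version could also be strengthened to the same by replacing $\lambda^2>m+3$ with $\lambda^2\ge m+\lambda-1$; the extra strength is unused in the sequel, which is presumably why the authors opted for the simpler direct bound.
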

\begin{proof}
Since $\lambda(G^\ast)\geq\frac{1+\sqrt{4m-3}}{2}>\sqrt{m+3}$ when $m\geq88$, we have
\begin{align*}
m+3&<\lambda^2=|U|+\sum_{u\in U_+}d_U(u)\frac{x_u}{x_{u^\ast}}+\sum_{w\in W}d_U(w)\frac{x_w}{x_{u^\ast}}\\
&\leq|U|+\sum_{u\in U_+}d_U(u)+\sum_{w\in W}d_U(w)\\
&=|U|+2e(U)+e(U,W).
\end{align*}
Note that $m=|U|+e(U)+e(U,W)+e(W)$. It follows that $e(U)> e(W)+3\geq3$. Hence $e(U)\geq4$.
\end{proof}

By Lemma \ref{edge number of U}, there exists at least one non-trivial component in $G^\ast[U]$. Let $\mathcal{H}$ be the set of all non-trivial components in $G^\ast[U]$. For each non-trivial component $H$ of $\mathcal{H}$, we denote $\gamma(H):=\sum_{u\in V(H)}(d_H(u)-1)\frac{x_u}{x_{u^\ast}}-e(H)$. Clearly, by (1) we have
\begin{align*}
e(W)&\leq\sum_{H\in\mathcal{H}}\gamma(H)+\sum_{w\in W}d_U(w)\frac{x_w}{x_{u^\ast}}-e(U,W)-\sum_{u\in U_0}\frac{x_u}{x_{u^\ast}}+1\\
&\leq\sum_{H\in\mathcal{H}}\gamma(H)-\sum_{u\in U_0}\frac{x_u}{x_{u^\ast}}+1\tag{2}
\end{align*}
with equality if and only if $\lambda^2-\lambda= m-1$ and $x_w=x_{u^\ast}$ for any $w\in W$ with $d_U(w)\geq 1$.

Since $G^\ast$ is $F_6$-free, by Lemma \ref{lem:Li-Zhao-2024}, we know that $G^\ast[U]$ does not contain path $P_5$. Similarly to the proof of Lemma 4.5 in \cite{Liu-Wang-2024}, we have the following result.
\begin{lem}\label{component cases}
Let $G^\ast$ be an $F_6$-free graph with $u\in V(G^\ast)$ and $H$ be a component of $G^\ast[N(u)]$. Then $H$ is one of the following cases:
\begin{enumerate}[(i)]
\item a star $K_{1,r}$ for $r\geq0$, where $K_{1,0}$ is an isolated vertex;
\item a double star $D_{a,b}$ for $a,b\geq1$;
\item $K_{1,r}+e$ where $r\geq2$ and $K_{1,2}+e$ is a triangle;
\item $C_4,K_4-e$ or $K_4$.
\end{enumerate}
\end{lem}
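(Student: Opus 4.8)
The only structural input is that $H$, being a component of $G^\ast[N(u)]$ with $G^\ast$ an $F_6$-free graph and $F_6=K_1\vee P_5$, is a connected graph containing no $P_5$; this is Lemma~\ref{lem:Li-Zhao-2024} with $k=6$. The plan is to run a finite case analysis on the cycle structure of $H$ --- whether $H$ is a tree, is triangle-free but contains a cycle, or contains a triangle --- and in each case to show that anything outside the claimed list forces a copy of $P_5$ inside $H$.

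Suppose first that $H$ is a tree, so its longest path has at most four vertices. A longest path of one or two vertices gives $H=K_{1,0}$ or $K_{1,1}$. If a longest path is $v_1v_2v_3$, then in a tree every remaining vertex must be adjacent to $v_2$, otherwise one extends the path; so $H=K_{1,r}$, case (i). If a longest path is $v_1v_2v_3v_4$, then every remaining vertex, lying on the unique path it has to $\{v_1,v_2,v_3,v_4\}$, must be adjacent to $v_2$ or $v_3$, so $H$ is a double star $D_{a,b}$ with $a,b\geq1$, case (ii). Suppose next that $H$ is triangle-free but has a cycle; then the girth of $H$ is at least $4$, a shortest cycle is induced, and a shortest cycle of length at least $5$ already contains $P_5$, so the girth is exactly $4$ and $H$ has an induced $C_4$, say on $\{v_1,v_2,v_3,v_4\}$. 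A fifth vertex $w$ would have a shortest path in $H$ to this $C_4$, and appending two consecutive cycle-edges at the far endpoint of that path yields a $P_5$; hence $|V(H)|=4$ and $H=C_4$, part of case (iv).

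The main case is that $H$ contains a triangle $T=\{v_1,v_2,v_3\}$, and here I would establish the following chain of claims, each failure of which exhibits a $P_5$. (a) Every $w\in V(H)\setminus T$ has a neighbour in $T$: otherwise a shortest path from $w$ to $T$ (of length at least $2$), together with two edges of the triangle, is a $P_5$. (b) $V(H)\setminus T$ is independent: an edge $w_1w_2$ there, with $T$-neighbours of $w_1$ and $w_2$ chosen distinct when possible, plus an edge of $T$, is a $P_5$; and if both $w_1,w_2$ are forced to the same vertex of $T$, then $w_1w_2$ followed by two triangle edges is a $P_5$. (c) At most one vertex of $V(H)\setminus T$ has two or more neighbours in $T$, since two such vertices, together with the triangle, would contain a $P_5$. (d) If some $w^\ast\in V(H)\setminus T$ has at least two neighbours in $T$, then $V(H)=T\cup\{w^\ast\}$: by (c) any further vertex $w'$ has exactly one neighbour in $T$, and then $w'$, its $T$-neighbour, $w^\ast$, and two vertices of $T$ form a $P_5$; hence $H$ is $K_4-e$ or $K_4$, completing case (iv). (e) Otherwise every vertex of $V(H)\setminus T$ has exactly one neighbour in $T$, and all use the same vertex of $T$, for two vertices using different triangle vertices would, with the triangle, contain a $P_5$; with (b) this forces $H$ to be a star centred at that vertex together with the opposite triangle edge, i.e. $H=K_{1,r}+e$ with $r\geq2$, case (iii).

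I do not expect a conceptual obstacle: the whole argument is a disciplined enumeration of sub-configurations, and the only repetitive labour is writing down the explicit $P_5$ contained in each bad configuration. The one point to handle with care is the choice of which triangle vertex, and which triangle edge, to route the forbidden path through, so that the five named vertices are pairwise distinct; this is the same device used in the proof of Lemma~4.5 of \cite{Liu-Wang-2024}, which the present argument parallels.
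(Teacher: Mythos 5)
Your proof is correct, and it takes essentially the same route the paper takes: apply Lemma~\ref{lem:Li-Zhao-2024} with $k=6$ to reduce to classifying connected graphs with no $P_5$ subgraph, then exhaust the cases by cycle structure. The paper itself does not write this out --- it only says the claim follows ``similarly to the proof of Lemma~4.5 in \cite{Liu-Wang-2024}'' --- so your contribution is to supply the case analysis the paper delegates to that reference. Your tree/girth-$4$/triangle trichotomy and the five claims (a)--(e) in the triangle case are sound; the only quibbles are cosmetic (e.g.\ in (b) and (d) the displayed $P_5$ sometimes uses two triangle edges rather than ``an edge of $T$'', and the five-vertex list in (d) should be read as $w'$, its $T$-neighbour, $w^\ast$, and the two remaining relevant $T$-vertices in some order), but in every branch a $P_5$ on five distinct vertices does exist, so the argument goes through.
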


Next we provide some upper bounds on $\gamma(H)$ where $H$ is a non-trivial component of $G^\ast[U]$.
\begin{lem}\label{upperbound}
Let $H$ be a non-trivial component of $G^\ast[U]$, then
\begin{align*}
\gamma(H)\leq
\begin{cases}
-1, \text{ if $H\cong K_{1,r}$ or $D_{a,b}$ where $r,a,b\geq1$},\\
0, \text{ if $H\cong K_{1,r}+e$ where $r\geq2$ or $C_4$ },\\
1, \text{ if $H\cong K_4-e$},\\
2, \text{ if $H\cong K_4$}.
\end{cases}
\end{align*}
\end{lem}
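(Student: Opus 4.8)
The plan is to exploit the single elementary fact that $u^\ast$ was chosen as an extremal vertex, so that $x_w\le x_{u^\ast}$ for every $w\in V(G^\ast)$, hence $\frac{x_w}{x_{u^\ast}}\le 1$. Since $H$ is a \emph{non-trivial} component of $G^\ast[U]$, every vertex $u\in V(H)$ has $d_H(u)\ge 1$, so each coefficient $d_H(u)-1$ occurring in $\gamma(H)$ is nonnegative. Replacing each ratio $\frac{x_u}{x_{u^\ast}}$ by $1$ can therefore only increase the sum, and using the handshake identity $\sum_{u\in V(H)}d_H(u)=2e(H)$ we get
\begin{align*}
\gamma(H)&=\sum_{u\in V(H)}(d_H(u)-1)\frac{x_u}{x_{u^\ast}}-e(H)\\
&\le\sum_{u\in V(H)}(d_H(u)-1)-e(H)=2e(H)-|V(H)|-e(H)=e(H)-|V(H)|.
\end{align*}
Thus the whole lemma reduces to computing the cyclomatic excess $e(H)-|V(H)|$ for each admissible shape of $H$.

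Next I would invoke Lemma \ref{component cases} to enumerate the possibilities for $H$ and read off $e(H)-|V(H)|$ in each case: if $H\cong K_{1,r}$ with $r\ge 1$ or $H\cong D_{a,b}$ with $a,b\ge1$, then $H$ is a tree, so $e(H)=|V(H)|-1$ and $\gamma(H)\le-1$; if $H\cong K_{1,r}+e$ with $r\ge2$ or $H\cong C_4$, then $H$ is unicyclic, so $e(H)=|V(H)|$ and $\gamma(H)\le0$; if $H\cong K_4-e$, then $|V(H)|=4$, $e(H)=5$ and $\gamma(H)\le1$; and if $H\cong K_4$, then $|V(H)|=4$, $e(H)=6$ and $\gamma(H)\le2$. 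Since Lemma \ref{component cases} guarantees that these are the only components that can occur, this yields exactly the four stated bounds.

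There is essentially no genuine obstacle here; the one point that must be respected is that the estimate in the first paragraph uses $d_H(u)-1\ge0$ for \emph{every} $u\in V(H)$, which is precisely why it is important that $H$ is a non-trivial component (hence contains no isolated vertex): an isolated vertex would contribute a term $-\frac{x_u}{x_{u^\ast}}$ that one cannot bound simply by dropping the ratio. It is also worth noting for later use that equality throughout forces $\frac{x_u}{x_{u^\ast}}=1$ for every non-leaf vertex $u$ of $H$ (and gives no constraint from the leaves), an observation that will matter when the extremal graph is identified, though it plays no role in the lemma itself.
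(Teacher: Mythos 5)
Your proof is correct and is essentially identical to the paper's: both bound $\gamma(H)$ by $e(H)-|V(H)|$ using $\frac{x_u}{x_{u^\ast}}\le 1$ together with $d_H(u)-1\ge 0$ for every vertex of a non-trivial component, and then read off $e(H)-|V(H)|$ from the list in Lemma \ref{component cases}. The additional remarks on where non-triviality is used and on the equality case are accurate but not needed for the lemma.
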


\begin{proof}
Since $\frac{x_u}{x_{u^\ast}}\leq1$ and $d_H(u)\geq1$ for any $u\in V(H)$, we have
\begin{align*}
\gamma(H)=\sum_{u\in V(H)}(d_H(u)-1)\frac{x_u}{x_{u^\ast}}-e(H)\leq\sum_{u\in V(H)}(d_H(u)-1)-e(H)= e(H)-|H|.
\end{align*}
If $H\cong K_{1,r}$ or $D_{a,b}$, then $e(H)-|H|=-1$. When $H\cong K_{1,r}+e$ or $C_4$, we have $e(H)-|H|=0$. For $H\cong K_4-e$, we get $e(H)-|H|=1$. If $H\cong K_4$, it follows that $e(H)-|H|=2$. Hence, the lemma holds.
\end{proof}

In fact, we can give a tighter upper bound about $\gamma(H)$.
\begin{lem}\label{component lemma}
For every non-trivial component $H$ in $G^\ast[U]$, we have $\gamma(H)\leq0$.
\end{lem}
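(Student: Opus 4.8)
The plan is to show that the only cases in Lemma \ref{upperbound} that can give $\gamma(H)>0$, namely $H\cong K_4-e$ and $H\cong K_4$, cannot actually occur, so that in every case $\gamma(H)\le 0$. The reason such components are forbidden is structural: if $G^\ast[U]$ contained a $K_4$ or a $K_4-e$, then together with $u^\ast$ this would already be a large dense subgraph, and I would try to build a copy of $F_6=K_1\vee P_5$ inside $N(w)$ for a suitable vertex $w$, or directly inside $G^\ast$, contradicting $F_6$-freeness. More precisely, suppose $H$ is a component of $G^\ast[U]$ isomorphic to $K_4$ on vertices $\{a,b,c,d\}$. Since $u^\ast$ is adjacent to all of $a,b,c,d$, the set $\{u^\ast,a,b,c,d\}$ spans a $K_5$. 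Now $K_5$ contains $F_5=K_1\vee P_4$, but not $F_6$; the point is to use a sixth vertex. By Lemma \ref{lem:Zhai-Lin-2021}, every vertex $w\in W$ has $d(w)\ge 2$, and since $G^\ast$ is connected with $e(U,W)>0$ in the relevant regime, I can find a vertex $w\in W$ adjacent to some vertex of $H$, say $w\sim a$. Then in $G^\ast$ the vertex $a$ together with $N(a)\supseteq\{u^\ast,b,c,d,w\}$ would need $G^\ast[N(a)]$ to be $P_5$-free by Lemma \ref{lem:Li-Zhao-2024}; but $G^\ast[\{u^\ast,b,c,d\}]\cong K_4$ already contains $P_4$, and I want to extend to $P_5$ using $w$ if $w$ has another neighbour among $\{u^\ast,b,c,d\}$, or using $u^\ast$'s other neighbours. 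This forces very restrictive adjacency patterns, which I would push to a contradiction.

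The cleaner route, which I expect to actually use, is to locate $F_6$ directly. If $H\cong K_4$ with vertex set $\{a,b,c,d\}\subseteq U$, consider the path $P_5 = a\,b\,c\,d\,?$ — we only have $4$ vertices in $H$, so I need one more vertex adjacent appropriately. Take any $u_0\in U\setminus V(H)$ (which exists since $|U|$ is large: from $\lambda^2 = |U| + \sum d_U(u)x_u/x_{u^\ast}+\cdots$ and $\lambda^2>m$, and $e(U)$ bounded via Lemma \ref{component cases}, we get $|U|$ of order $m$). Since $u_0$ and all of $a,b,c,d$ lie in $N(u^\ast)$, the graph $G^\ast[N(u^\ast)]$ must be $P_5$-free; but $G^\ast[\{u_0,a,b,c,d\}]$ — if $u_0$ were adjacent to any vertex of $H$, say $a$ — contains the path $u_0\,a\,b\,c\,d$, a $P_5$, contradiction. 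Hence no vertex of $U\setminus V(H)$ is adjacent to $V(H)$, i.e.\ $H$ is a full component of $G^\ast[U]$; that's consistent with $H$ being a component, so I need the extra vertex to come from $W$ instead. So take $w\in W$ with $w\sim a$ for some $a\in V(H)$ (exists by connectivity and $e(U,W)\ge 1$; if $e(U,W)=0$ then $G^\ast$ disconnects, contradicting Lemma \ref{lem:Zhai-Lin-2021}). Relabel so $H=\{a,b,c,d\}$ with $w\sim a$. Then $u^\ast$ has neighbourhood containing $\{a,b,c,d\}$ and $G^\ast[N(u^\ast)]\supseteq K_4$ on these, which contains $P_4 = b\,c\,d\,a$ (or any $P_4$), hence $G^\ast$ contains $K_1\vee P_4 = F_5$ on $\{u^\ast,a,b,c,d\}$; to get $F_6=K_1\vee P_5$ I instead center at a different vertex: since $w\sim a$ and $a\sim u^\ast,b,c,d$, look at $N(a)\supseteq\{u^\ast,b,c,d,w\}$; then $G^\ast[N(a)]$ contains the path $w\,?\,?\,?\,?$ — I need $w$ to connect into $\{u^\ast,b,c,d\}$, which it need not. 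So the correct statement to extract is: $G^\ast[\{a,b,c,d,u^\ast\}]=K_5$, and adding $w\sim a$, the induced subgraph on $\{w,a,b,c,d,u^\ast\}$ contains $F_6$ as a (not necessarily induced) subgraph, since $w\,a\,b\,c\,d$ together with center... no — $F_6$ needs a dominating vertex over a $P_5$. Take center $u^\ast$: it dominates $a,b,c,d$ but not $w$, so that fails. Take the $P_5 = w\,a\,b\,c\,d$: its dominating vertex would need to be adjacent to all of $w,a,b,c,d$ — the only candidate is a common neighbour, and $u^\ast$ misses $w$. Thus a single pendant $w$ is not enough, and I must use $d_W(w)\ge 1$ or a second $W$-vertex.

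The main obstacle, then, is exactly this \emph{bookkeeping to produce $F_6$} from a $K_4$ or $K_4-e$ in $U$: one must combine the four vertices of $H$, the apex $u^\ast$, and the edges into $W$ (using $d(w)\ge 2$ from Lemma \ref{lem:Zhai-Lin-2021}, so each $w$ adjacent to $H$ has another neighbour) to exhibit either a $P_5$ inside some $G^\ast[N(v)]$ or a direct $K_1\vee P_5$. Concretely: with $H\cong K_4$ on $\{a,b,c,d\}$ and $w\sim a$ with a second neighbour $w'$ of $w$, either $w'\in\{b,c,d\}$ — say $w'=b$ — then $N(a)\supseteq\{u^\ast,b,c,d,w\}$ and $G^\ast[N(a)]$ contains $P_5 = c\,u^\ast\,d\,b\,w$? (needs $u^\ast\sim c,d$ yes, $b\sim d$ yes, $b\sim w$ yes) — that's a $P_5$ in $G^\ast[N(a)]$, contradicting Lemma \ref{lem:Li-Zhao-2024}; or $w'=u^\ast$, then $w\in U=N(u^\ast)$, contradiction; or $w'\in W\cup(U\setminus(V(H)\cup\{w\}))$ or $w'\in U_0$ — then consider $N(u^\ast)$: we found earlier no vertex of $U\setminus V(H)$ meets $V(H)$, so $w'\notin U$ adjacent to... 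I'd then center at $a$: $N(a)\supseteq\{u^\ast,b,c,d,w\}$ contains $P_5 = b\,u^\ast\,c\,?$ no... The genuinely careful case analysis — splitting on where the second neighbour of $w$ lies, and on $K_4$ versus $K_4-e$ (where the missing edge restricts which $P_4$'s are available) — is the real content. For $K_4-e$, say edges all but $cd$, one gets $P_4 = c\,a\,b\,d$ inside $H$, and the same pendant-extension argument applies verbatim. Once every non-trivial component is shown to be a star, double star, $K_{1,r}+e$, or $C_4$, Lemma \ref{upperbound} gives $\gamma(H)\le 0$ in all surviving cases, completing the proof.
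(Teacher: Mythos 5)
Your proposal tries to prove a much stronger statement than the lemma asks for — that $K_4$ and $K_4-e$ cannot occur at all as components of $G^\ast[U]$ — and to do so by a purely combinatorial search for a copy of $F_6$. That route cannot work, for two separate reasons.

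First, the stronger statement is simply false for $F_6$-free graphs in general. For example, $G=K_1\vee\bigl(K_4\cup rK_1\bigr)$ is $F_6$-free (every closed neighbourhood induces at most a $K_4$ plus isolated vertices, hence contains no $P_5$), and here $G[N(u^\ast)]$ has a $K_4$ component. One can even attach a vertex $w$ of degree $2$ in $W$ to a $K_4$ component in $U$ without creating $F_6$, as long as $w$ has only one neighbour in the $K_4$ and its second neighbour lies elsewhere (another $U$-vertex or another $W$-vertex); your own case analysis runs into exactly this and leaves it open, and indeed there is no $F_6$ to be found there. So the presence of a $K_4$ (or $K_4-e$) component is not a combinatorial obstruction; it is only ruled out for the \emph{spectral extremal} graph, and only after additional spectral information is available.

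Second, and this is the structural gap, the paper does eventually show $H\not\cong K_4$ and $H\not\cong K_4-e$ (Lemmas \ref{no K4 lemma} and \ref{no K4-e lemma}), but those proofs come \emph{after} Lemma \ref{component lemma} and crucially use its consequence $e(W)\le 1$ together with the Perron-vector rotation Lemma \ref{lem:Wu-Xiao-2005}. So proving ``no $K_4/K_4-e$ component'' first, in order to deduce $\gamma(H)\le0$, is logically circular relative to the paper. What the paper actually does in the proof of Lemma \ref{component lemma} is quite different in kind: it assumes some component $H$ has $\gamma(H)>0$ (so $H\cong K_4$ or $K_4-e$), derives eigenvector estimates — a lower bound $\sum_{u\in V(H)}x_u>\tfrac{5}{2}x_{u^\ast}$, an upper bound on the number of such components, and an upper bound $x_w\le\frac{2\lambda-3}{2\lambda}x_{u^\ast}$ for $w\in W_0$ — and then splits into $e(W)\le 3$ and $e(W)\ge 4$, in each case plugging these estimates into the key inequality (1) to reach a numerical contradiction with $\lambda>\frac{49}{5}$. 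No copy of $F_6$ is exhibited; the contradiction is with the extremality of $\lambda(G^\ast)$. To repair your write-up you would need to abandon the ``find an $F_6$'' strategy entirely for this lemma and instead develop eigenvector bounds of the type the paper uses.
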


\begin{proof}
Suppose to the contrary that there exists at least one component $H\in\mathcal{H}$ satisfying $\gamma(H)>0$. Let $\widetilde{\mathcal{H}}=\{H\in\mathcal{H}:\gamma(H)>0\}$. By Lemma \ref{upperbound}, we know that $H$ is $K_4$ or $K_4-e$ for any component $H\in\widetilde{\mathcal{H}}$. We have the following claims.

\begin{claim}\label{bound of eigenvector}
$\sum_{u\in V(H)}x_u>\frac{5}{2}x_{u^\ast}$ for any $H\in\widetilde{\mathcal{H}}$.
\end{claim}

\begin{proof}
Suppose that $\sum_{u\in V(H_0)}x_u\leq\frac{5}{2}x_{u^\ast}$ for some $H_0\in\widetilde{\mathcal{H}}$. Then
\begin{align*}
\gamma(H_0)&=\sum_{u\in V(H_0)}(d_{H_0}(u)-1)\frac{x_u}{x_{u^\ast}}-e(H_0)\\
&\leq(\Delta(H_0)-1)\sum_{u\in V(H_0)}\frac{x_u}{x_{u^\ast}}-e(H_0)\\
&\leq2\sum_{u\in V(H_0)}\frac{x_u}{x_{u^\ast}}-5\\
&\leq0,
\end{align*}
which contradicts $H_0\in\widetilde{\mathcal{H}}$. Therefore, $\sum_{u\in V(H)}x_u>\frac{5}{2}x_{u^\ast}$ for any $H\in\widetilde{\mathcal{H}}$.
\end{proof}

\begin{claim}\label{component number}
The number $|\widetilde{\mathcal{H}}|$ of members in $\widetilde{\mathcal{H}}$ satisfying $|\widetilde{\mathcal{H}}|<\frac{2}{5}\lambda+\frac{7}{10}$.
\end{claim}

\begin{proof}
Suppose $|\widetilde{\mathcal{H}}|\geq\frac{2}{5}\lambda+\frac{7}{10}$. For every $H\in\widetilde{\mathcal{H}}$, we have
\begin{align*}
\lambda\sum_{u\in V(H)}x_u&=\sum_{u\in V(H)}\left(x_{u^\ast}+\sum_{v\in N_H(u)}x_v+\sum_{w\in N_W(u)}x_w\right)\\
&=4x_{u^\ast}+\sum_{u\in V(H)}\sum_{v\in N_H(u)}x_v+\sum_{u\in V(H)}\sum_{w\in N_W(u)}x_w\\
&\leq4x_{u^\ast}+\sum_{u\in V(H)}d_H(u)x_u+e(H,W)x_{u^\ast}\\
&=4x_{u^\ast}+\gamma(H)x_{u^\ast}+e(H)x_{u^\ast}+\sum_{u\in V(H)}x_u+e(H,W)x_{u^\ast}.
\end{align*}
Note that $H$ is $K_4$ or $K_4-e$, by Lemma \ref{upperbound}, we have $\gamma(H)\leq2$. Furthermore, $e(H)\leq6$. Therefore,
\begin{align*}
(\lambda-1)\sum_{u\in V(H)}x_u&\leq4x_{u^\ast}+\gamma(H)x_{u^\ast}+e(H)x_{u^\ast}+e(H,W)x_{u^\ast}\\
&\leq4x_{u^\ast}+2x_{u^\ast}+6x_{u^\ast}+e(H,W)x_{u^\ast}\\
&=12x_{u^\ast}+e(H,W)x_{u^\ast}.
\end{align*}
Hence, by Claim \ref{bound of eigenvector}, we obtain
\begin{align*}
e(H,W)x_{u^\ast}&\geq(\lambda-1)\sum_{u\in V(H)}x_u-12x_{u^\ast}\\
&>\frac{5(\lambda-1)}{2}x_{u^\ast}-12x_{u^\ast}\\
&=\frac{5\lambda-29}{2}x_{u^\ast}.
\end{align*}
It follows that $e(H,W)>\frac{5\lambda-29}{2}$. Since
\begin{align*}
m&\geq |U|+\sum_{H\in\widetilde{\mathcal{H}}}e(H)+\sum_{H\in\widetilde{\mathcal{H}}}e(H,W)\\
&>4|\widetilde{\mathcal{H}}|+5|\widetilde{\mathcal{H}}|+\left(\frac{5\lambda-29}{2}\right)|\widetilde{\mathcal{H}}|\\
&=\left(\frac{5\lambda-11}{2}\right)|\widetilde{\mathcal{H}}|\\
&\geq\left(\frac{5\lambda-11}{2}\right)\left(\frac{2}{5}\lambda+\frac{7}{10}\right)\\
&=\lambda^2-\frac{9}{20}\lambda-\frac{77}{20}.
\end{align*}
Since $\lambda>\frac{49}{5}$, we have $m>\lambda^2-\frac{9}{20}\lambda-\frac{77}{20}>\lambda^2-\lambda+1$, which contradicts $\lambda\geq\frac{1+\sqrt{4m-3}}{2}$. Therefore, $|\widetilde{\mathcal{H}}|<\frac{2}{5}\lambda+\frac{7}{10}$.
\end{proof}

\begin{claim}\label{degree of W0}
If $W_0\neq\emptyset$, then $x_w\leq\frac{2\lambda-3}{2\lambda}x_{u^\ast}$ for any $w\in W_0$.
\end{claim}

\begin{proof}
Suppose to the contrary that there exists $w_0\in W_0$ such that $x_{w_0}>\frac{2\lambda-3}{2\lambda}x_{u^\ast}$. For every $H\in\widetilde{\mathcal{H}}$, if $w_0\notin N_{G^\ast}(u)$ for every $u\in V(H)$, then $\lambda x_{w_0}\leq\lambda x_{u^\ast}-\sum_{u\in V(H)}x_u$. It follows that $\sum_{u\in V(H)}x_u\leq\lambda x_{u^\ast}-\lambda x_{w_0}<\lambda x_{u^\ast}-\frac{2\lambda-3}{2}x_{u^\ast}=\frac{3}{2}x_{u^\ast}$. Hence $\gamma(H)\leq2\sum_{u\in V(H)}\frac{x_u}{x_{u^\ast}}-e(H)<3-5<0$, contradicting $H\in\widetilde{\mathcal{H}}$. Next, we consider that there exists $H'\in\widetilde{\mathcal{H}}$ satisfying $w_0\in N_{G^\ast}(u)$ for some $u\in V(H')$. Let $V(H')=\{u_1,u_2,u_3,u_4\}$ and $u_1u_2u_3u_4u_1$ is a cycle in $H'$. Without loss of generality, assume $w_0\in N_W(u_1)$. If $H'\cong K_4$, then $N_U(w_0)\cap V(H')=\{u_1\}$. Otherwise, $G^\ast[u^\ast,u_1,u_2,u_3,u_4,w_0]$ contains an $F_6$. Therefore, $\lambda x_{w_0}\leq\lambda x_{u^\ast}-\sum_{i=2}^4x_{u_i}$. It follows that $\sum_{i=2}^4x_{u_i}\leq\lambda x_{u^\ast}-\lambda x_{w_0}<\frac{3}{2}x_{u^\ast}$. Thus, $\gamma(H')\leq2\sum_{u\in V(H')}\frac{x_u}{x_{u^\ast}}-e(H')\leq2+2\sum_{i=2}^4\frac{x_{u_i}}{x_{u^\ast}}-6<2+3-6<0$, contradicting $H'\in\widetilde{\mathcal{H}}$. When $H'\cong K_4-e$, if $d_{H'}(u_1)=2$, then $u_2,u_4\notin N_{G^\ast}(w_0)$. Otherwise, $G^\ast$ contains an $F_6$. So $\lambda x_{w_0}\leq\lambda x_{u^\ast}-(x_{u_2}+x_{u_4})$. That is, $x_{u_2}+x_{u_4}\leq\lambda x_{u^\ast}-\lambda x_{w_0}<\frac{3}{2}x_{u^\ast}$. Hence, $\gamma(H')=\frac{x_{u_1}}{x_{u^\ast}}+2\cdot\frac{x_{u_2}}{x_{u^\ast}}+\frac{x_{u_3}}{x_{u^\ast}}+2\cdot\frac{x_{u_4}}{x_{u^\ast}}-5<2+2\cdot\frac{3}{2}-5=0$, a contradiction. If $d_{H'}(u_1)=3$, then $u_2,u_3,u_4\notin N_{G^\ast}(w_0)$. By $\lambda x_{w_0}\leq\lambda x_{u^\ast}-(x_{u_2}+x_{u_3}+x_{u_4})$, we obtain $x_{u_2}+x_{u_3}+x_{u_4}\leq\lambda x_{u^\ast}-\lambda x_{w_0}<\frac{3}{2}x_{u^\ast}$. Hence, $\gamma(H')=2\cdot\frac{x_{u_1}}{x_{u^\ast}}+\frac{x_{u_2}}{x_{u^\ast}}+2\cdot\frac{x_{u_3}}{x_{u^\ast}}+\frac{x_{u_4}}{x_{u^\ast}}-5<3+\frac{3}{2}-5<0$, a contradiction. Therefore,  $x_w\leq\frac{2\lambda-3}{2\lambda}x_{u^\ast}$ for any $w\in W_0$.
\end{proof}

Now, we come back to prove Lemma \ref{component lemma}. Choose a component $H\in\widetilde{\mathcal{H}}$. We proceed by distinguishing the following two cases.

{\bf Case 1. }$e(W)\leq3$.

If $W_H=\emptyset$, then $d_{G^\ast}(u)\leq4$ for every $u\in V(H)$. Therefore, $\lambda x_u=\sum_{v\in N(u)}x_v\leq4x_{u^\ast}$. That is, $x_u\leq\frac{4}{\lambda}x_{u^\ast}$. By $d_H(u)\leq3$ for every $u\in V(H)$ and $e(H)\geq5$, we have $\gamma(H)\leq2\sum_{u\in V(H)}\frac{x_u}{x_{u^\ast}}-5\leq2\cdot4\cdot\frac{4}{\lambda}-5=\frac{32}{\lambda}-5$. Note that $\lambda>\frac{49}{5}$, then $\gamma(H)<0$, which contradicts $H\in\widetilde{\mathcal{H}}$. Now we consider $W_H\neq\emptyset$. If $H\cong K_4$, we have $|N_H(w)\cap V(H)|=1$ for any $w\in W_H$. Otherwise, $G^\ast$ contains an $F_6$, a contradiction. Let $x_{u_1}=\max\{x_u|u\in V(H)\}$. It follows that $N_H(w)\cap V(H)=\{u_1\}$. Otherwise, suppose $N_H(w_0)\cap V(H)=\{u_2\}$ for some $w_0\in W_H$. By $e(W)\leq3$, we get $G'=G^\ast-w_0u_2+w_0u_1$ is $F_6$-free. According to $x_{u_1}\geq x_{u_2}$ and Lemma \ref{lem:Wu-Xiao-2005}, we get $\lambda(G')>\lambda(G^\ast)$, which contradicts the maximality of $G^\ast$. Thus, $N_W(u)=\emptyset$ for any $u\in V(H)\setminus u_1$. That is, $d_{G^\ast}(u)=4$ for every $u\in V(H)\setminus u_1$. So we obtain $\lambda x_u\leq4x_{u^\ast}$ for every $u\in V(H)\setminus u_1$. Therefore, by $\lambda>\frac{49}{5}$, we have $\gamma(H)\leq2+6\cdot\frac{4}{\lambda}-6<0$, a contradiction. If $H\cong K_4-e$, suppose $d_H(u_1)=d_H(u_2)=3$ and $x_{u_1}\geq x_{u_2}$. If $N_W(u_1)\cup N_W(u_2)=\emptyset$, then $d_{G^\ast}(u_1)=d_{G^\ast}(u_2)=4$. It follows that $\lambda x_{u_i}\leq4x_{u^\ast}$ for $i=1,2$. Then $\gamma(H)\leq2\sum_{i=1}^2\frac{x_{u_i}}{x_{u^\ast}}+\sum_{u\in V(H)\setminus\{u_1,u_2\}}\frac{x_{u_i}}{x_{u^\ast}}-5\leq4\cdot\frac{4}{\lambda}+2-5<0$, a contradiction. If $N_W(u_1)\cup N_W(u_2)\neq\emptyset$,  since $G^\ast$ is $F_6$-free, we have $|N_G(w)\cap\{u_1,u_2\}|=1$ and $|N_G(w)\cap (V(H)\setminus\{u_1,u_2\})|=0$ for every $w\in N_W(u_1)\cup N_W(u_2)$. Recall that $x_{u_1}\geq x_{u_2}$. It follows that $N_G(w)\cap\{u_1,u_2\}=\{u_1\}$. Otherwise, there is a vertex $w_0\in N_W(u_2)$. By $e(W)\leq3$, we have $G'=G^\ast-w_0u_2+w_0u_1$ is $F_6$-free. According to Lemma \ref{lem:Wu-Xiao-2005}, we get $\lambda(G')>\lambda(G^\ast)$, which contradicts the maximality of $G^\ast$. Thus, $N_W(u_2)=\emptyset$. That is, $d_{G^\ast}(u_2)=4$. So $\lambda x_{u_2}\leq4x_{u^\ast}$. Hence, by $\lambda>\frac{49}{5}$, we get $\gamma(H)\leq2\sum_{i=1}^2\frac{x_{u_i}}{x_{u^\ast}}+\sum_{u\in V(H)\setminus\{u_1,u_2\}}\frac{x_{u_i}}{x_{u^\ast}}-5\leq2+2\cdot\frac{4}{\lambda}+2-5<0$, a contradiction.

{\bf Case 2. }$e(W)\geq4$.

Let $V(H)=\{u_1,u_2,u_3,u_4\}$. If $H\cong K_4$, then $N_W(u_1)\cap N_W(u_2)\cap N_W(u_3)\cap N_W(u_4)=\emptyset$. Otherwise, $G^\ast$ contains an $F_6$. Therefore, $\sum_{i=1}^4\sum_{w\in N_{W_{H}}(u_i)}x_w=\sum_{w\in W_0\cap W_H}x_w+\sum_{w\in W_H\setminus W_0}x_w$. By Claim \ref{degree of W0}, we have $\sum_{w\in W_0\cap W_H}x_w\leq \frac{(2\lambda-3)e(H,W_0)}{2\lambda}x_{u^\ast}$. Since $d_W(w)\geq1$ for $w\in W_H\setminus W_0$, we have $\sum_{w\in W_H\setminus W_0}x_w\leq\sum_{w\in W_H\setminus W_0}d_W(w)x_w\leq2e(W)x_{u^\ast}$. Since
\begin{align*}
\begin{cases}
\lambda x_{u_1}= x_{u_2}+x_{u_3}+x_{u_4}+x_{u^\ast}+\sum_{w\in N_{W_{H}}(u_1)}x_w,\\
\lambda x_{u_2}= x_{u_1}+x_{u_3}+x_{u_4}+x_{u^\ast}+\sum_{w\in N_{W_{H}}(u_2)}x_w,\\
\lambda x_{u_3}= x_{u_1}+x_{u_2}+x_{u_4}+x_{u^\ast}+\sum_{w\in N_{W_{H}}(u_3)}x_w,\\
\lambda x_{u_4}= x_{u_1}+x_{u_2}+x_{u_3}+x_{u^\ast}+\sum_{w\in N_{W_{H}}(u_4)}x_w,
\end{cases}
\end{align*}
we have
\begin{align*}
(\lambda-3)(x_{u_1}+x_{u_2}+x_{u_3}+x_{u_4})=4x_{u^\ast}+\sum_{i=1}^4\sum_{w\in N_{W_{H}}(u_i)}x_w.
\end{align*}
That is,
\begin{align*}
x_{u_1}+x_{u_2}+x_{u_3}+x_{u_4}&=\frac{4x_{u^\ast}+\sum_{i=1}^4\sum_{w\in N_{W_{H}}(u_i)}x_w}{\lambda-3}\\
&\leq\frac{4}{\lambda-3}x_{u^\ast}+\frac{(2\lambda-3)e(H,W_0)}{2\lambda(\lambda-3)}x_{u^\ast}+\frac{2e(W)}{\lambda-3}x_{u^\ast}.
\end{align*}
Thus,
\begin{align*}
\sum_{u\in V(H)}(d_H(u)-1)\frac{x_u}{x_{u^\ast}}&=\frac{2(x_{u_1}+x_{u_2}+x_{u_3}+x_{u_4})}{x_{u^\ast}}\\
&=\frac{x_{u_1}+x_{u_2}+x_{u_3}+x_{u_4}}{x_{u^\ast}}+\frac{x_{u_1}+x_{u_2}+x_{u_3}+x_{u_4}}{x_{u^\ast}}\\
&\leq4+\frac{4}{\lambda-3}+\frac{(2\lambda-3)e(H,W_0)}{2\lambda(\lambda-3)}+\frac{2e(W)}{\lambda-3}\\
&=\left(4+\frac{4}{\lambda-3}+\frac{\frac{2\lambda-5}{2\lambda-4}e(W)}{\lambda-3}\right)+\frac{(2\lambda-3)e(H,W_0)}{2\lambda(\lambda-3)}+\frac{\frac{2\lambda-3}{2\lambda-4}e(W)}{\lambda-3}.
\end{align*}
By (2), Lemma \ref{upperbound} and Claim \ref{component number}, we have $e(W)\leq2|\widetilde{\mathcal{H}}|+1<\frac{4}{5}\lambda+\frac{12}{5}$. This implies that $\frac{4}{\lambda-3}+\frac{\frac{2\lambda-5}{2\lambda-4}e(W)}{\lambda-3}<\frac{8\lambda^2+44\lambda-140}{5(\lambda-3)(2\lambda-4)}$. Since $\lambda>\frac{49}{5}$, we have $\frac{4}{\lambda-3}+\frac{\frac{2\lambda-5}{2\lambda-4}e(W)}{\lambda-3}<2$. Hence,
\begin{align*}
\sum_{u\in V(H)}(d_H(u)-1)\frac{x_u}{x_{u^\ast}}&<6+\frac{(2\lambda-3)e(H,W_0)}{2\lambda(\lambda-3)}+\frac{\frac{2\lambda-3}{2\lambda-4}e(W)}{\lambda-3}\\
&=e(H)+\frac{(2\lambda-3)e(H,W_0)}{2\lambda(\lambda-3)}+\frac{\frac{2\lambda-3}{2\lambda-4}e(W)}{\lambda-3}.
\end{align*}

If $H\cong K_4-e$, let $d_H(u_2)=d_H(u_4)=3$. Then $N_W(u_1)\cap N_W(u_2)\cap N_W(u_4)=\emptyset$. Hence, $\sum_{i\in\{1,2,4\}}\sum_{w\in N_{W_{H}}(u_i)}x_w=\sum_{w\in W_0\cap (\cup_{i\in\{1,2,4\}}N_W(u_i))}x_w+\sum_{w\in (\cup_{i\in\{1,2,4\}}N_W(u_i))\setminus W_0}x_w$. By Claim \ref{degree of W0}, we have $\sum_{w\in W_0\cap (\cup_{i\in\{1,2,4\}}N_W(u_i))}x_w\leq \frac{(2\lambda-3)e(H,W_0)}{2\lambda}x_{u^\ast}$. Since $d_W(w)\geq1$ for $w\in (\cup_{i\in\{1,2,4\}}N_W(u_i))\setminus W_0$, we have $\sum_{w\in (\cup_{i\in\{1,2,4\}}N_W(u_i))\setminus W_0}x_w\leq\sum_{w\in W_H\setminus W_0}d_W(w)x_w\leq2e(W)x_{u^\ast}$. Since
\begin{align*}
\begin{cases}
\lambda x_{u_1}= x_{u_2}+x_{u_4}+x_{u^\ast}+\sum_{w\in N_{W_{H}}(u_1)}x_w,\\
\lambda x_{u_2}= x_{u_1}+x_{u_3}+x_{u_4}+x_{u^\ast}+\sum_{w\in N_{W_{H}}(u_2)}x_w,\\
\lambda x_{u_4}= x_{u_1}+x_{u_2}+x_{u_3}+x_{u^\ast}+\sum_{w\in N_{W_{H}}(u_4)}x_w,
\end{cases}
\end{align*}
we have
\begin{align*}
\lambda(x_{u_1}+x_{u_2}+x_{u_4})=2(x_{u_1}+x_{u_2}+x_{u_4})+2x_{u_3}+3x_{u^\ast}+\sum_{i\in\{1,2,4\}}\sum_{w\in N_{W_{H}}(u_i)}x_w.
\end{align*}
That is,
\begin{align*}
x_{u_1}+x_{u_2}+x_{u_4}&\leq\frac{5x_{u^\ast}+\sum_{i\in\{1,2,4\}}\sum_{w\in N_{W_{H}}(u_i)}x_w}{\lambda-2}\\
&\leq\frac{5}{\lambda-2}x_{u^\ast}+\frac{(2\lambda-3)e(H,W_0)}{2\lambda(\lambda-2)}x_{u^\ast}+\frac{2e(W)}{\lambda-2}x_{u^\ast}.
\end{align*}
Thus,
\begin{align*}
\sum_{u\in V(H)}(d_H(u)-1)\frac{x_u}{x_{u^\ast}}&=\frac{x_{u_1}+2x_{u_2}+x_{u_3}+2x_{u_4}}{x_{u^\ast}}\\
&=\frac{x_{u_2}+x_{u_3}+x_{u_4}}{x_{u^\ast}}+\frac{x_{u_1}+x_{u_2}+x_{u_4}}{x_{u^\ast}}\\
&\leq3+\frac{5}{\lambda-2}+\frac{(2\lambda-3)e(H,W_0)}{2\lambda(\lambda-2)}+\frac{2e(W)}{\lambda-2}\\
&=\left(3+\frac{5}{\lambda-2}+\frac{e(W)}{\lambda-2}\right)+\frac{(2\lambda-3)e(H,W_0)}{2\lambda(\lambda-2)}+\frac{e(W)}{\lambda-2}.
\end{align*}
By (2), Lemma \ref{upperbound} and Claim \ref{component number}, we have $e(W)\leq2|\widetilde{\mathcal{H}}|+1<\frac{4}{5}\lambda+\frac{12}{5}$. This implies that $\frac{5}{\lambda-2}+\frac{e(W)}{\lambda-2}<\frac{4\lambda+37}{5(\lambda-2)}$. Since $\lambda>\frac{49}{5}$, we have $\frac{5}{\lambda-2}+\frac{e(W)}{\lambda-2}<2$. Hence,
\begin{align*}
\sum_{u\in V(H)}(d_H(u)-1)\frac{x_u}{x_{u^\ast}}&<5+\frac{(2\lambda-3)e(H,W_0)}{2\lambda(\lambda-2)}+\frac{e(W)}{\lambda-2}\\
&=e(H)+\frac{(2\lambda-3)e(H,W_0)}{2\lambda(\lambda-2)}+\frac{e(W)}{\lambda-2}.
\end{align*}
By $\lambda>\frac{49}{5}$, we get $\frac{1}{\lambda-2}\leq\frac{1}{\lambda-3}\cdot\frac{2\lambda-3}{2\lambda-4}$. Therefore, $\sum_{u\in V(H)}(d_H(u)-1)\frac{x_u}{x_{u^\ast}}<e(H)+\frac{(2\lambda-3)e(H,W_0)}{2\lambda(\lambda-3)}+\frac{\frac{2\lambda-3}{2\lambda-4}e(W)}{\lambda-3}$ for any $H\in\widetilde{\mathcal{H}}$. For any $H\in\mathcal{H}\setminus\widetilde{\mathcal{H}}$, since $\gamma(H)\leq0$, we have $\sum_{u\in V(H)}(d_H(u)-1)\frac{x_u}{x_{u^\ast}}\leq e(H)$. Thus, by $\lambda>\frac{49}{5}$, we obtain
\begin{align*}
&\sum_{u\in U_+}(d_U(u)-1)\frac{x_u}{x_{u^\ast}}+\sum_{w\in W}d_U(w)\frac{x_w}{x_{u^\ast}}\\
&=\sum_{H\in \widetilde{\mathcal{H}}}\sum_{u\in V(H)}(d_H(u)-1)\frac{x_u}{x_{u^\ast}}+\sum_{H\in \mathcal{H}\setminus\widetilde{\mathcal{H}}}\sum_{u\in V(H)}(d_H(u)-1)\frac{x_u}{x_{u^\ast}}+\sum_{w\in W_0}d_U(w)\frac{x_w}{x_{u^\ast}}+\sum_{w\in W\setminus W_0}d_U(w)\frac{x_w}{x_{u^\ast}}\\
&<\sum_{H\in \widetilde{\mathcal{H}}}\left(e(H)+\frac{(2\lambda-3)e(H,W_0)}{2\lambda(\lambda-3)}+\frac{\frac{2\lambda-3}{2\lambda-4}e(W)}{\lambda-3}\right)+\sum_{H\in \mathcal{H}\setminus\widetilde{\mathcal{H}}}e(H)+\sum_{w\in W_0}d_U(w)\cdot\frac{2\lambda-3}{2\lambda}\\
&+\sum_{w\in W\setminus W_0}d_U(w)\\
&< e(U_+)+\frac{(2\lambda-3)e(U,W_0)}{2\lambda(\lambda-3)}+\frac{\frac{2\lambda-3}{2\lambda-4}e(W)}{\lambda-3}\cdot\left(\frac{2}{5}\lambda+\frac{7}{10}\right)+\frac{(2\lambda-3)e(U,W_0)}{2\lambda}+e(U,W\setminus W_0)\\
&=e(U_+)+\left(\frac{2\lambda-3}{2\lambda(\lambda-3)}+\frac{2\lambda-3}{2\lambda}\right)e(U,W_0)+e(U,W\setminus W_0)+\frac{1}{\lambda-3}\cdot\frac{2\lambda-3}{2\lambda-4}\cdot\left(\frac{2}{5}\lambda+\frac{7}{10}\right)e(W)\\
&<e(U_+)+e(U,W_0)+e(U,W\setminus W_0)+\frac{3}{4}e(W)\\
&\leq e(U_+)+e(U,W)+e(W)-1,
\end{align*}
which contradicts (1). This completes the proof of Lemma \ref{component lemma}.
\end{proof}

By (2) and Lemma \ref{component lemma}, we know that $e(W)\leq1$.

\begin{lem}\label{no K4 lemma}
For any $H\in\mathcal{H}$, we have $H\ncong K_4$.
\end{lem}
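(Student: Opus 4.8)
The plan is to argue by contradiction. Suppose some component $H_0$ of $G^\ast[U]$ is isomorphic to $K_4$, with $V(H_0)=\{u_1,u_2,u_3,u_4\}$. By Lemma~\ref{component lemma}, $\gamma(H)\le 0$ for every $H\in\mathcal{H}$, so $\sum_{H\in\mathcal{H}}\gamma(H)\le\gamma(H_0)$; meanwhile inequality (2) gives $\sum_{H\in\mathcal{H}}\gamma(H)\ge e(W)+\sum_{u\in U_0}\tfrac{x_u}{x_{u^\ast}}-1\ge -1$. Hence it suffices to prove the \emph{strict} bound $\gamma(H_0)<-1$. Since $\gamma(H_0)=2\sum_{i=1}^4\tfrac{x_{u_i}}{x_{u^\ast}}-6$, this amounts to showing $\sum_{i=1}^4 x_{u_i}<\tfrac52\,x_{u^\ast}$; in other words, the task is to upgrade the crude estimate $\gamma(H_0)\le 2$ of Lemma~\ref{upperbound} to one lying safely below $-1$.

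First I would record a structural fact: $\{u^\ast\}\cup V(H_0)$ induces a $K_5$, so no $w\in W$ is adjacent to two vertices of $V(H_0)$. Indeed, if $w\sim u_a$ and $w\sim u_b$ with $a\ne b$, writing $\{u_c,u_d\}=V(H_0)\setminus\{u_a,u_b\}$, the path $w\,u_b\,u^\ast\,u_c\,u_d$ lies entirely inside $N_{G^\ast}(u_a)$, so $G^\ast[\{u_a,w,u_b,u^\ast,u_c,u_d\}]$ contains $F_6$, a contradiction. Consequently $N_{G^\ast}(u_i)=\{u^\ast\}\cup\big(V(H_0)\setminus\{u_i\}\big)\cup N_W(u_i)$ with $N_W(u_1),\dots,N_W(u_4)$ pairwise disjoint, and summing the four eigen-equations $\lambda x_{u_i}=x_{u^\ast}+\sum_{j\ne i}x_{u_j}+\sum_{w\in N_W(u_i)}x_w$ yields
$$(\lambda-3)\sum_{i=1}^4 x_{u_i}=4x_{u^\ast}+\sum_{w\in W_{H_0}}x_w.$$

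Next I would bound $\sum_{w\in W_{H_0}}x_w$ by global edge counts. For each $w\in W$ we have $\lambda x_w=\sum_{v\in N_{G^\ast}(w)}x_v\le d_{G^\ast}(w)\,x_{u^\ast}$, and since no vertex of $W$ is adjacent to $u^\ast$, $\sum_{w\in W}d_{G^\ast}(w)=e(U,W)+2e(W)=m-|U|-e(U)+e(W)$. Using $e(W)\le 1$ (established just before this lemma), $e(U)\ge 4$ (Lemma~\ref{edge number of U}), $|U|\ge\lambda$ (from $\lambda x_{u^\ast}=\sum_{u\in U}x_u\le|U|x_{u^\ast}$) and $m\le\lambda^2-\lambda+1$, this gives $\sum_{w\in W_{H_0}}x_w\le\tfrac{1}{\lambda}\big(m-|U|-e(U)+1\big)x_{u^\ast}\le\tfrac{\lambda^2-2\lambda-2}{\lambda}\,x_{u^\ast}$. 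Substituting into the displayed identity, $\sum_{i=1}^4 x_{u_i}\le\tfrac{\lambda^2+2\lambda-2}{\lambda(\lambda-3)}\,x_{u^\ast}$; and since $\lambda>\tfrac{49}{5}$ one has $3\lambda^2-19\lambda+4>0$, which is equivalent to $\tfrac{\lambda^2+2\lambda-2}{\lambda(\lambda-3)}<\tfrac52$. Hence $\sum_{i=1}^4 x_{u_i}<\tfrac52\,x_{u^\ast}$, so $\gamma(H_0)<-1$, contradicting the lower bound above.

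I expect the only delicate point to be keeping the quantitative estimates honest: the bound on $\sum_{w\in W}d_{G^\ast}(w)$ must be taken globally — it is not controlled by anything local to $H_0$ — and one must verify that the closing inequality $\tfrac{\lambda^2+2\lambda-2}{\lambda(\lambda-3)}<\tfrac52$ holds with room to spare under $\lambda>\tfrac{49}{5}$, which is precisely where $m\ge 88$ enters. The structural claim that each $W$-vertex meets $V(H_0)$ in at most one vertex is routine from $F_6$-freeness, and the rest is bookkeeping with (1), (2), Lemma~\ref{edge number of U} and Lemma~\ref{component lemma}.
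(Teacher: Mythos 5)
Your proof is correct, but it takes a genuinely different route from the paper's. The paper splits into two cases according to whether $W_{H_0}=\emptyset$ or not; in the nonempty case it invokes extremality of $G^\ast$ together with the rotation result (Lemma~\ref{lem:Wu-Xiao-2005}) to force every vertex of $W_{H_0}$ to attach to a single vertex $u_1$ with largest eigenvector coordinate, which in turn gives $d_{G^\ast}(u_i)=4$ and hence $x_{u_i}\le\frac{2}{\lambda-2}x_{u^\ast}$ for $i\ge 2$, leading to $\gamma(H_0)<-1$. Your argument skips both the case split and the rotation lemma entirely: you sum the four eigen-equations to obtain the identity $(\lambda-3)\sum_{i=1}^4 x_{u_i}=4x_{u^\ast}+\sum_{w\in W_{H_0}}x_w$ (using only $F_6$-freeness to see that the sets $N_W(u_i)$ are pairwise disjoint), and then bound $\sum_{w\in W_{H_0}}x_w$ by the \emph{global} estimate $\sum_{w\in W}d_{G^\ast}(w)=m-|U|-e(U)+e(W)\le\lambda^2-2\lambda-2$ coming from $m\le\lambda^2-\lambda+1$, $|U|\ge\lambda$, $e(U)\ge 4$, and $e(W)\le 1$. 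This trades the paper's local structural control for a single uniform degree-sum bound on $W$, and the closing numerical inequality $3\lambda^2-19\lambda+4>0$ is comfortably satisfied for $\lambda>\tfrac{49}{5}$. What the paper's route buys is that it never needs to aggregate information across all of $W$; what yours buys is avoiding the extremality/rotation step and the case analysis. I verified the structural claim (each $w\in W$ meets $V(H_0)$ in at most one vertex, since otherwise $u_a\vee\{w\,u_b\,u^\ast\,u_c\,u_d\}$ is an $F_6$), the disjointness, the identity, and the arithmetic; all are sound.
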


\begin{proof}
Suppose to the contrary that there exists a component $H_0\in\mathcal{H}$ satisfying $H_0\cong K_4$. Let $V(H_0)=\{u_1,u_2,u_3,u_4\}$. We distinguish the following two cases to lead a contradiction, respectively.

{\bf Case 1. }$W_{H_0}=\emptyset$.

In this case, it is easy to get $x_{u_1}=x_{u_2}=x_{u_3}=x_{u_4}$. By $\lambda x_{u_1}=x_{u_2}+x_{u_3}+x_{u_4}+x_{u^\ast}$, we obtain $x_{u_1}=\frac{x_{u^\ast}}{\lambda-3}$. By (2) and Lemma \ref{component lemma}, we have
\begin{align*}
e(W)&\leq\sum_{H\in\mathcal{H}\setminus H_0}\gamma(H)+\gamma(H_0)+1\\
&\leq0+\frac{2(x_{u_1}+x_{u_2}+x_{u_3}+x_{u_4})}{x_{u^\ast}}-6+1\\
&\leq\frac{8}{\lambda-3}-5.
\end{align*}
Since $\lambda>\frac{49}{5}$, we get $e(W)<0$, a contradiction.

{\bf Case 2. }$W_{H_0}\neq\emptyset$.

Since $G^\ast$ contains no $F_6$, we have $|N_U(w)\cap V(H_0)|=1$ for every $w\in W_{H_0}$. Suppose $x_{u_1}\geq x_{u_2}\geq x_{u_3}\geq x_{u_4}$. Then $N_U(w)\cap V(H_0)=\{u_1\}$ for every $w\in W_{H_0}$. Otherwise, assume $N_U(w_0)\cap V(H_0)=\{u_2\}$ for some $w_0\in W_{H_0}$. Combining with $e(W)\leq1$, we can verify that $G'=G^\ast-w_0u_2+w_0u_1$ is $F_6$-free. By Lemma \ref{lem:Wu-Xiao-2005}, we have $\lambda(G')>\lambda(G^\ast)$, which contradicts the maximality of $G^\ast$. Therefore, $N_W(u_i)=\emptyset$ for $i\in\{2,3,4\}$. By $\lambda x_{u_2}=x_{u_1}+x_{u_3}+x_{u_4}+x_{u^\ast}\leq x_{u_1}+2x_{u_2}+x_{u^\ast}$, we get $x_{u_2}\leq\frac{2x_{u^\ast}}{\lambda-2}$. Note that $x_{u_4}\leq x_{u_3}\leq x_{u_2}$. It follows that $x_{u_4}\leq x_{u_3}\leq\frac{2x_{u^\ast}}{\lambda-2}$.  Hence,
\begin{align*}
e(W)&\leq\sum_{H\in\mathcal{H}\setminus H_0}\gamma(H)+\gamma(H_0)+1\\
&\leq0+\frac{2(x_{u_1}+x_{u_2}+x_{u_3}+x_{u_4})}{x_{u^\ast}}-6+1\\
&\leq2\left(1+3\cdot\frac{2}{\lambda-2}\right)-5\\
&\leq\frac{12}{\lambda-2}-3<0.
\end{align*}
a contradiction. This completes the proof.
\end{proof}

\begin{lem}\label{no K4-e lemma}
For any $H\in\mathcal{H}$, we have $H\ncong K_4-e$.
\end{lem}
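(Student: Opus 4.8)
The plan is to imitate the two-case scheme of Lemma~\ref{no K4 lemma}. Suppose for contradiction that some component $H_0$ of $G^\ast[U]$ is isomorphic to $K_4-e$, and write $V(H_0)=\{u_1,u_2,u_3,u_4\}$ with $d_{H_0}(u_2)=d_{H_0}(u_4)=3$, so that $u_1,u_3$ are the two vertices of degree $2$ and $u_1u_3\notin E(G^\ast)$. By~(2) and Lemma~\ref{component lemma} we have $e(W)\le\sum_{H\in\mathcal H}\gamma(H)+1\le\gamma(H_0)+1$, so it suffices to prove $\gamma(H_0)<-1$, which forces the contradiction $e(W)<0$.

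If $W_{H_0}=\emptyset$, then $d_{G^\ast}(u_i)=d_{H_0}(u_i)+1\le4$ for every $i$, hence $\lambda x_{u_i}\le4x_{u^\ast}$; since $d_{H_0}(u_i)-1\le2$ and $e(H_0)=5$, this gives $\gamma(H_0)\le2\sum_{i=1}^{4}\tfrac{x_{u_i}}{x_{u^\ast}}-5\le\tfrac{32}{\lambda}-5<-1$ because $\lambda>\tfrac{49}{5}$. So assume $W_{H_0}\ne\emptyset$. I would first describe how $W$ meets $H_0$ using $F_6$-freeness: for $i\in\{2,4\}$ the neighbourhood of $u_i$ induces on $\{u^\ast\}\cup(V(H_0)\setminus\{u_i\})$ a copy of $K_4-e$, which has a Hamiltonian path ending at any prescribed vertex, so Lemma~\ref{lem:Li-Zhao-2024} applied at $u_i$ forces every $w\in W$ adjacent to a degree-$3$ vertex of $H_0$ to have no other neighbour in $V(H_0)$; together with $u_1u_3\notin E(G^\ast)$ this shows $N_{H_0}(w)$ is empty, a single vertex, or $\{u_1,u_3\}$. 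Let $u_j\in V(H_0)$ be a vertex of maximum Perron weight. Using the switching Lemma~\ref{lem:Wu-Xiao-2005} I would move every $W$-edge whose $H_0$-endpoint is a single vertex onto $u_j$; each switched graph stays $F_6$-free because the relocated vertex still has exactly one neighbour in $V(H_0)$ and, since $e(W)\le1$, checking the affected neighbourhoods for a new $P_5$ is routine. The only $W$-edges that cannot be relocated this way come from vertices $w$ with $N_{H_0}(w)=\{u_1,u_3\}$; for such a $w$ I would set $G''=G^\ast-wu_3+u_1u_3$, which keeps the size $m$ and (again by a short check using $e(W)\le1$) is $F_6$-free, while $G''[U]$ now has a $K_4$ component. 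Since $\lambda(G'')\ge\lambda(G^\ast)+\tfrac{2x_{u_3}(x_{u_1}-x_w)}{\sum_v x_v^2}$ by the Rayleigh quotient, provided $x_{u_1}\ge x_w$ one gets either $\lambda(G'')>\lambda(G^\ast)$ (contradicting the choice of $G^\ast$) or $\lambda(G'')=\lambda(G^\ast)$, in which case $\mathbf x$ is still the Perron vector of $G''$ so $u^\ast$ remains its extremal vertex and the $K_4$ component of $G''[U]$ contradicts Lemma~\ref{no K4 lemma}. Hence after all these operations every $W$-edge meeting $H_0$ is incident with $u_j$.

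With all $W$-edges concentrated at $u_j$, the other three vertices of $V(H_0)$ have $G^\ast$-degree equal to their $H_0$-degree plus $1$, hence at most $4$ (indeed at most $3$ for the two degree-$2$ vertices among them), so their Perron weights are at most $\tfrac{4}{\lambda}x_{u^\ast}$ (resp.\ $\tfrac{3}{\lambda}x_{u^\ast}$), while $x_{u_j}\le x_{u^\ast}$. If $u_j$ is one of the degree-$3$ vertices (coefficient $d_{H_0}(u_j)-1=2$) this yields $\gamma(H_0)\le2+\tfrac{3}{\lambda}+\tfrac{3}{\lambda}+\tfrac{8}{\lambda}-5=\tfrac{14}{\lambda}-3<-1$, and if $u_j$ is one of the degree-$2$ vertices (coefficient $1$) it yields $\gamma(H_0)\le1+\tfrac{8}{\lambda}+\tfrac{8}{\lambda}+\tfrac{3}{\lambda}-5=\tfrac{19}{\lambda}-4<-1$, in both cases because $\lambda>\tfrac{49}{5}$. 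This establishes $\gamma(H_0)<-1$ and finishes the proof.

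The hard part is the $W_{H_0}\ne\emptyset$ case, and within it the sub-case of a vertex $w$ with $N_{H_0}(w)=\{u_1,u_3\}$ that has additional neighbours — its at most one $W$-neighbour or some vertices of $U\setminus V(H_0)$ — because then one cannot guarantee $x_w\le x_{u_1}$, and the $K_4$-exchange $G''$ above need not be weight-nondecreasing. Handling this seems to need an auxiliary switch (rerouting edges of $u_3$, or the single $W$-edge at $w$, onto $w$) with a careful verification of $F_6$-freeness against the one permitted edge inside $W$; getting every final estimate to land strictly below $-1$, rather than merely at $0$ or $-1$ as in the proof of Lemma~\ref{component lemma}, is the delicate bookkeeping point of the lemma.
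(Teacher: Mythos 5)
Your proposal diverges substantially from the paper's proof, and you correctly flag its own central gap, which is real. The paper splits into the cases $N_W(u_2)\cup N_W(u_4)=\emptyset$ and $N_W(u_2)\cup N_W(u_4)\neq\emptyset$ (where $u_2,u_4$ are the degree-$3$ vertices), not $W_{H_0}=\emptyset$ versus $W_{H_0}\neq\emptyset$. In the first case it observes that $u_2,u_4$ are then interchanged by an automorphism of $G^\ast$ (so $x_{u_2}=x_{u_4}$), from which $x_{u_2}\leq\frac{3}{\lambda-1}x_{u^\ast}$ and $\gamma(H_1)\leq -3+\frac{12}{\lambda-1}<-1$ directly. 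In the second case it performs exactly one switch (to get $N_W(u_4)=\emptyset$, hence $x_{u_4}\leq\frac{4}{\lambda}x_{u^\ast}$), then bounds $x_w$ for $w\in W_0$, uses $N_W(u_1)\cap N_W(u_2)=\emptyset$ (forced by a $P_5$ in $N(u_2)$), and feeds the resulting bound on $x_{u_1}+x_{u_2}$ into the master inequality (1). No ``concentrate all $W$-edges at one vertex'' step and no $K_4$-exchange is used; vertices $w$ with $N_{H_0}(w)=\{u_1,u_3\}$ never need to be relocated or individually estimated, because they just feed into the $\sum_w x_w$ terms that are bounded uniformly.

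The gap you identify is genuine and your proposal does not close it: the Rayleigh-quotient computation for $G''=G^\ast-wu_3+u_1u_3$ gives $\lambda(G'')\geq\lambda(G^\ast)+2x_{u_3}(x_{u_1}-x_w)$, which is useless if $x_w>x_{u_1}$, and nothing in the hypotheses rules this out (such a $w$ may have several neighbours in $U\setminus V(H_0)$). The suggested ``auxiliary switch'' is not specified, so the argument is incomplete precisely where it is hardest. A secondary concern is that even when $x_{u_1}\geq x_w$ and $\lambda(G'')=\lambda(G^\ast)$, invoking Lemma~\ref{no K4 lemma} requires re-establishing that $u^\ast$ is an extremal vertex of $G''$, that $G''$ is connected, and that the full inequality (1) re-derives for $G''$; that chain is plausible but needs to be said. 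Finally, your claim that the single-vertex relocation ``stays $F_6$-free; checking for a new $P_5$ is routine'' is stated but not verified; it is true here (the affected neighbourhoods have only four non-$W$ vertices of $H_0\cup\{u^\ast\}$, and $e(W)\leq1$ limits what $W$-vertices can contribute), but the check is not entirely free, and the paper sidesteps it by switching only a single $W$-endpoint from $u_4$ to $u_2$. You should either close the $\{u_1,u_3\}$ sub-case or replace your scheme with the paper's case split and direct estimates.
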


\begin{proof}
Suppose to the contrary that $H_1$ is a component in $\mathcal{H}$ such that $H_1\cong K_4-e$. Let $V(H_1)=\{u_1,u_2,u_3,u_4\}$ with $d_{H_1}(u_2)=d_{H_1}(u_4)=3$. We discuss the following two cases to lead a contradiction, respectively.

{\bf Case 1. }$N_W(u_2)\cup N_W(u_4)=\emptyset$.

In this case, it is easy to get $x_{u_2}=x_{u_4}$. By $\lambda x_{u_2}=x_{u_1}+x_{u_3}+x_{u_4}+x_{u^\ast}$, we obtain $x_{u_2}\leq\frac{3x_{u^\ast}}{\lambda-1}$. By (2) and Lemma \ref{component lemma}, we have
\begin{align*}
e(W)&\leq\sum_{H\in\mathcal{H}\setminus H_1}\gamma(H)+\gamma(H_1)+1\\
&\leq0+\frac{x_{u_1}+2x_{u_2}+x_{u_3}+2x_{u_4}}{x_{u^\ast}}-5+1\\
&\leq2+4\cdot\frac{3}{\lambda-1}-4\\
&\leq\frac{12}{\lambda-1}-2.
\end{align*}
Note that $\lambda>\frac{49}{5}$, we have $e(W)<0$, a contradiction.

{\bf Case 2. }$N_W(u_2)\cup N_W(u_4)\neq\emptyset$.

If $\gamma(H_1)<-1$, then by (2) and Lemma \ref{component lemma}, we know that $e(W)<0$, a contradiction. So $-1\leq\gamma(H_1)\leq0$. Since $G^\ast$ contains no $F_6$, we have $|N_U(w)\cap \{u_2,u_4\}|=1$ and $|N_U(w)\cap \{u_1,u_3\}|=0$ for any $w\in N_W(u_2)\cup N_W(u_4)$. Suppose $x_{u_2}\geq x_{u_4}$. Then $N_U(w)\cap V(H_1)=\{u_2\}$ for any $w\in N_W(u_2)\cup N_W(u_4)$. Otherwise, assume $N_U(w_0)\cap V(H_1)=\{u_4\}$ for some $w_0\in N_W(u_2)\cup N_W(u_4)$ . Note that $e(W)\leq1$, it is easy to find that $G'=G^\ast-w_0u_4+w_0u_2$ is $F_6$-free. By Lemma \ref{lem:Wu-Xiao-2005}, we have $\lambda(G')>\lambda(G^\ast)$, contradicting the maximality of $G^\ast$. Hence, $N_W(u_4)=\emptyset$. By $\lambda x_{u_4}=x_{u_1}+x_{u_2}+x_{u_3}+x_{u^\ast}\leq4x_{u^\ast}$, we have $x_{u_4}\leq\frac{4x_{u^\ast}}{\lambda}$.

We first show that if $W_0\neq\emptyset$, then $x_w\leq\frac{\lambda-1}{\lambda}x_{u^\ast}$ for any $w\in W_0$. Suppose that there exists a vertex $w_0\in W_0$ such that $x_{w_0}>\frac{\lambda-1}{\lambda}x_{u^\ast}$. If $w_0\in N_W(u_2)$, then $u_1,u_3,u_4\notin N_U(w_0)$. Therefore, $\lambda x_{w_0}\leq \lambda x_{u^\ast}-x_{u_1}-x_{u_3}-x_{u_4}$. That is, $x_{u_1}+x_{u_3}+x_{u_4}\leq\lambda x_{u^\ast}-\lambda x_{w_0}<x_{u^\ast}$. Hence, $\gamma(H_1)=\frac{x_{u_1}+2x_{u_2}+x_{u_3}+2x_{u_4}}{x_{u^\ast}}-5<1+3-5=-1$, a contradiction. If $w_0\notin N_W(u_2)$, then by $N_W(u_4)=\emptyset$, we have $u_2,u_4\notin N_U(w_0)$. So $\lambda x_{w_0}\leq \lambda x_{u^\ast}-x_{u_2}-x_{u_4}$. Thus, $x_{u_2}+x_{u_4}<x_{u^\ast}$. It follows that $\gamma(H_1)=\frac{x_{u_1}+2x_{u_2}+x_{u_3}+2x_{u_4}}{x_{u^\ast}}-5<2+2-5=-1$, a contradiction. Therefore, $x_w\leq\frac{\lambda-1}{\lambda}x_{u^\ast}$ for any $w\in W_0$.

Since $G^\ast$ is $F_6$-free, we have $N_W(u_1)\cap N_W(u_2)=\emptyset$. Then $\sum_{i=1}^2\sum_{w\in N_{W_{H_1}}(u_i)}x_w=\sum_{w\in W_0\cap \left(\cup_{i\in\{1,2\}}N_W(u_i)\right)}x_w+\sum_{w\in \left(\cup_{i\in\{1,2\}}N_W(u_i)\right)\setminus W_0}x_w$. Since $x_w\leq\frac{\lambda-1}{\lambda}x_{u^\ast}$ for any $w\in W_0$, we have $\sum_{w\in W_0\cap \left(\cup_{i\in\{1,2\}}N_W(u_i)\right)}x_w\leq \frac{(\lambda-1)e(H_1,W_0)}{\lambda}x_{u^\ast}$. Since $d_W(w)\geq1$ for $w\in (\cup_{i\in\{1,2\}}N_W(u_i))\setminus W_0$, we have $\sum_{w\in \left(\cup_{i\in\{1,2\}}N_W(u_i)\right)\setminus W_0}x_w\leq\sum_{w\in W_{H_1}\setminus W_0}d_W(w)x_w\leq2e(W)x_{u^\ast}$. By
\begin{align*}
\begin{cases}
\lambda x_{u_1}= x_{u_2}+x_{u_4}+x_{u^\ast}+\sum_{w\in N_{W_{H_1}}(u_1)}x_w,\\
\lambda x_{u_2}= x_{u_1}+x_{u_3}+x_{u_4}+x_{u^\ast}+\sum_{w\in N_{W_{H_1}}(u_2)}x_w,
\end{cases}
\end{align*}
we obtain $\lambda(x_{u_1}+x_{u_2})=x_{u_1}+x_{u_2}+x_{u_3}+2x_{u_4}+2x_{u^\ast}+\sum_{i=1}^2\sum_{w\in N_{W_{H_1}}(u_i)}x_w$. That is, \begin{align*}
x_{u_1}+x_{u_2}&=\frac{x_{u_3}+2x_{u_4}+2x_{u^\ast}+\sum_{i=1}^2\sum_{w\in N_{W_{H_1}}(u_i)}x_w}{\lambda-1}\\
&\leq\frac{x_{u^\ast}+2\cdot\frac{4x_{u^\ast}}{\lambda}+2x_{u^\ast}+\frac{(\lambda-1)e(H_1,W_0)}{\lambda}x_{u^\ast}+2e(W)x_{u^\ast}}{\lambda-1}\\
&=\frac{(3+\frac{8}{\lambda})x_{u^\ast}+\frac{(\lambda-1)e(H_1,W_0)}{\lambda}x_{u^\ast}+2e(W)x_{u^\ast}}{\lambda-1}.
\end{align*}
Furthermore, by \ref{component lemma}, we know that $\gamma(H)=\sum_{u\in V(H)}(d_H(u)-1)\frac{x_u}{x_{u^\ast}}-e(H)\leq0$. That is, $\sum_{u\in V(H)}(d_H(u)-1)\frac{x_u}{x_{u^\ast}}\leq e(H)$ for any $H\in \mathcal{H}\setminus H_1$.
Thus,
\begin{align*}
&\sum_{u\in U_+}(d_U(u)-1)\frac{x_u}{x_{u^\ast}}+\sum_{w\in W}d_U(w)\frac{x_w}{x_{u^\ast}}\\
&=\sum_{H\in \mathcal{H}\setminus H_1}\sum_{u\in V(H)}(d_H(u)-1)\frac{x_u}{x_{u^\ast}}+\sum_{u\in V(H_1)}(d_{H_1}(u)-1)\frac{x_u}{x_{u^\ast}}+\sum_{w\in W_0}d_U(w)\frac{x_w}{x_{u^\ast}}+\sum_{w\in W\setminus W_0}d_U(w)\frac{x_w}{x_{u^\ast}}\\
&\leq\sum_{H\in \mathcal{H}\setminus H_1}e(H)+\frac{3+\frac{8}{\lambda}+\frac{(\lambda-1)e(H_1,W_0)}{\lambda}+2e(W)}{\lambda-1}+2+2\cdot\frac{4}{\lambda}+\frac{(\lambda-1)e(U,W_0)}{\lambda}+e(U,W\setminus W_0)\\
&=\sum_{H\in \mathcal{H}\setminus H_1}e(H)+\frac{3+\frac{8}{\lambda}}{\lambda-1}+2+2\cdot\frac{4}{\lambda}+\frac{e(H_1,W_0)}{\lambda}+\frac{(\lambda-1)e(U,W_0)}{\lambda}+\frac{2e(W)}{\lambda-1}+e(U,W\setminus W_0).
\end{align*}
Note that $\lambda>\frac{49}{5}$, it can find that $\frac{3+\frac{8}{\lambda}}{\lambda-1}+2+2\cdot\frac{4}{\lambda}=\frac{11}{\lambda-1}+2<4=e(H_1)-1$. Hence,
\begin{align*}
&\sum_{u\in U_+}(d_U(u)-1)\frac{x_u}{x_{u^\ast}}+\sum_{w\in W}d_U(w)\frac{x_w}{x_{u^\ast}}\\
&<\sum_{H\in \mathcal{H}\setminus H_1}e(H)+e(H_1)-1+e(U,W_0)+e(W)+e(U,W\setminus W_0)\\
&=e(U_+)-1+e(U,W)+e(W),
\end{align*}
which contradicts (1). This completes the proof.
\end{proof}
\begin{lem}\label{edge number of W}
$e(W)=0$.
\end{lem}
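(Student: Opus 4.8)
Since the discussion before the lemma already gives $e(W)\le 1$, the plan is to assume $e(W)=1$ — say $w_1w_2$ is the unique edge of $G^\ast[W]$ — and to force the inequalities (1)–(2) to equality, pinning down $G^\ast$ so rigidly that $F_6$-freeness fails. First, because $d(w)\ge2$ (Lemma~\ref{lem:Zhai-Lin-2021}) while $d_W(w)\le e(W)=1$, every $w\in W$ has $d_U(w)\ge1$; so putting $e(W)=1$ into (2) and using $\gamma(H)\le0$ (Lemma~\ref{component lemma}) squeezes everything to equality, yielding $\gamma(H)=0$ for all $H\in\mathcal H$, $U_0=\emptyset$, and (the equality case of (2)) $\lambda^2-\lambda=m-1$ together with $x_w=x_{u^\ast}$ for \emph{every} $w\in W$. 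By Lemmas~\ref{component cases},~\ref{upperbound},~\ref{no K4 lemma},~\ref{no K4-e lemma}, each $H\in\mathcal H$ is then a triangle, a $K_{1,r}+e$ with $r\ge3$, or a $C_4$, and $\gamma(H)=0$ forces $x_u=x_{u^\ast}$ whenever $d_H(u)\ge2$. Choosing a degree-$2$ vertex $v$ of some $H$ all of whose $H$-neighbours have degree $\ge2$ in $H$, all neighbours of $v$ in $G^\ast$ have eigenvector entry $x_{u^\ast}$, so $\lambda=d(v)\in\mathbb Z$; write $\lambda=t\ge10$, so $m=t^2-t+1$ and $d(w)\ge t$ for all $w\in W$.

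Next I would run an edge count. Each component of $G^\ast[U]$ is unicyclic, so $e(U)=|U|$, and $m=|U|+e(U)+e(U,W)+e(W)$ gives $2|U|+e(U,W)=t^2-t$. Summing the eigenvalue equations over each component and using $x_w=x_{u^\ast}$, the quantity $e(H,W)$ is pinned down for each $H$ — in particular every vertex of a triangle or a $C_4$, and the two leaves incident with the extra edge of a $K_{1,r}+e$, has exactly $t-3$ neighbours in $W$ — and, writing $p_2,p_3,q$ for the numbers of the three component types and $\nu_i$ for the number of $W$-neighbours of the $i$-th $K_{1,r}+e$ ($r\ge3$) centre, substitution reduces this to $3p_2+tp_3+4q-\sum_i\nu_i=t$. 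To control $\nu_i$ I use $F_6$-freeness: if some $w\in W$ is adjacent to a centre $c$ and to one further vertex of its component, then $c$ plus a suitable $5$-vertex path inside $N_{G^\ast}(c)$ through $u^\ast$, $w$ and leaves of that component (for instance $\ell-u^\ast-b-a-w$ when $w\sim a$, or $a-b-u^\ast-\ell-w$ when $w\sim\ell$) is a copy of $F_6$; hence $N_W(c)$ is disjoint from the $W$-neighbourhood of each special leaf, and since $|W|\le t-2$ (from $\sum_w d(w)=2+e(U,W)$, $e(U,W)=t^2-t-2|U|$, $|U|=e(U)\ge4$) we get $\nu_i\le1$, so $\sum_i\nu_i\le p_3$. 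The displayed identity then leaves only two possibilities: (a) $p_3=1$, $p_2=q=0$, $\nu_1=0$, so $G^\ast[U]$ is a single $K_{1,r}+e$, and $d(c)=r+1\ge t$ forces $r\ge t-1$; or (b) $p_3=0$, $3p_2+4q=t$, hence $|U|=t$.

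To finish, in either case $\sum_w d(w)=2+e(U,W)$ and $d(w)\ge t$ together with the value of $|U|$ force $|W|=t-3$; then the vertices known to have exactly $t-3$ neighbours in $W$ (all of $U$ in case (b); the two special leaves $a,b$ in case (a)) are adjacent to \emph{all} of $W$. Using $w_1w_2$, I exhibit an $F_6$: in case (b) with a triangle $v_1v_2v_3$, the path $u^\ast-v_2-w_1-w_2-v_3$ has all five vertices adjacent to $v_1$; if $p_2=0$ (so $q\ge1$) use a $C_4$ $u_1u_2u_3u_4$ with path $u^\ast-u_2-w_1-w_2-u_4$ and apex $u_1$; in case (a), the path $u^\ast-c-b-w_1-w_2$ has all five vertices adjacent to $a$. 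Each is a genuine copy of $F_6\subseteq G^\ast$, contradicting $F_6$-freeness; hence $e(W)=0$.

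I expect the main obstacle to be the componentwise edge-count leading to $3p_2+tp_3+4q-\sum_i\nu_i=t$ together with the bound $\nu_i\le1$: the formula for $e(H,W)$ must be set up correctly and the forbidden $F_6$-configurations verified with care, the triangle components (which have no distinguished centre) need a slightly separate treatment throughout, and in the last step one must double-check that the displayed five-vertex walks really are paths whose apex is adjacent to all five vertices.
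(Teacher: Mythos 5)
Your proposal is correct, but it takes a considerably longer and more elaborate route than the paper's own argument. Both proofs begin identically: assume $e(W)=1$ with $E(W)=\{w_1w_2\}$, use (2) together with Lemma~\ref{component lemma} to force equality, and deduce $U_0=\emptyset$, $\gamma(H)=0$ for every $H\in\mathcal H$, and $x_w=x_{u^\ast}$ for every $w\in W$. From here the two arguments diverge sharply. The paper simply picks \emph{any} non-trivial component $H$ (necessarily $K_{1,r}+e$ with $r\ge2$, or $C_4$), uses $\gamma(H)=0$ to deduce that all vertices of the cycle $C$ in $H$ have eigenvector entry $x_{u^\ast}$, and then observes that if $w_1$ missed two vertices of $C$, then $\lambda x_{w_1}\le x_{w_2}+\lambda x_{u^\ast}-2x_{u^\ast}=(\lambda-1)x_{u^\ast}$, contradicting $x_{w_1}=x_{u^\ast}$; hence each of $w_1,w_2$ is adjacent to at least two vertices of a triangle (resp.\ three of a $C_4$), and a short case check exhibits a $P_5$ inside $N(u)$ for a suitable $u\in C$, hence an $F_6$. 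Your proof instead pins down $\lambda$ as an integer $t$ via a vertex $v$ all of whose neighbours carry entry $x_{u^\ast}$, then runs a global edge/vertex count using $\lambda x_{u^\ast}=\sum_{u\in U}x_u$ to obtain $3p_2+tp_3+4q-\sum_i\nu_i=t$, bounds $\nu_i\le1$ via another $F_6$-freeness argument, classifies $G^\ast[U]$ into two structural shapes, shows $|W|=t-3$ with appropriate vertices adjacent to all of $W$, and only then produces the forbidden $F_6$. The structural information you extract (integrality of $\lambda$, $m=t^2-t+1$, $|W|=t-3$, near-complete bipartiteness between $U$ and $W$) is genuinely correct and would be valuable if one were trying to describe the near-extremal graphs, but for the purpose of proving $e(W)=0$ it is far more machinery than needed; the paper's local argument around a single cycle in a single component bypasses all of the counting and classification. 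Your worry about the triangle components needing special treatment is well-founded but, as you set it up, is handled correctly; the real redundancy is that the entire integer-spectrum/classification scaffolding is unnecessary once one notices the paper's local degree argument at $w_1$.
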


\begin{proof}
Suppose that $e(W)\neq0$. Recall that $e(W)\leq1$. Then $e(W)=1$. Combining Lemma \ref{component lemma} and inequality (2), we obtain $0\geq\sum_{H\in\mathcal{H}}\gamma(H)\geq\sum_{u\in U_0}\frac{x_u}{x_{u^\ast}}$. Since $x_u>0$ for any $u\in V(G^\ast)$, we get $U_0=\emptyset$ and $\gamma(H)=0$ for any $H\in\mathcal{H}$. Moreover, $x_w=x_{u^\ast}$ for $w\in W$ and $d_U(w)\geq1$. Let $E(W)=\{w_1w_2\}$. By Lemma \ref{lem:Zhai-Lin-2021}, we know that $d_U(w_1)\geq1$ and $d_U(w_2)\geq1$. So $x_{w_1}=x_{w_2}=x_{u^\ast}$. Note that $\mathcal{H}$ is not empty. Choose a component $H\in\mathcal{H}$. By Lemmas \ref{component cases}, \ref{upperbound}, \ref{no K4 lemma} and \ref{no K4-e lemma}, we have $H\cong K_{1,r}+e$ or $C_4$. If $H\cong K_{1,r}+e$ with $r\geq2$, then $H$ contains a triangle $C_3$. Let $V(C_3)=\{u_1,u_2,u_3\}$. It follows that $w_i$ is adjacent to at least two vertices of $C_3$ for $i\in\{1,2\}$. Otherwise, assume $w_1$ is adjacent to at most one vertex, without loss of generality, suppose $u_1$. Then $\lambda x_{w_1}\leq x_{w_2}+\lambda x_{u^\ast}-x_{u_2}-x_{u_3}$. Since $\gamma(H)=0$, we know that $x_{u_i}=x_{u^\ast}$ for $i\in\{1,2,3\}$. So $\lambda x_{w_1}\leq (\lambda-1) x_{u^\ast}$, which contradicts $x_{w_1}=x_{u^\ast}$. Therefore, $w_i$ is adjacent to at least two vertices of $C_3$ for $i\in\{1,2\}$. One can find that $G^\ast$ contains an $F_6$, a contradiction. If $H\cong C_4$, similarly, $w_i$ is adjacent to at least three vertices of $C_4$ for $i\in\{1,2\}$. One can verify that $G^\ast$ contains an $F_6$, a contradiction. Hence, $e(W)=0$.
\end{proof}

\begin{lem}\label{no C4 lemma}
For any $H\in\mathcal{H}$, we have $H\ncong C_4$.
\end{lem}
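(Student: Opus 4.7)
The plan is to argue by contradiction. Suppose some $H_3 \in \mathcal{H}$ satisfies $H_3 \cong C_4$, and label $V(H_3) = \{u_1, u_2, u_3, u_4\}$ so that $u_1 u_2 u_3 u_4 u_1$ is the cycle. Combining inequality (2), Lemma \ref{component lemma} and the conclusion $e(W)=0$ of Lemma \ref{edge number of W}, we obtain $\sum_{H \in \mathcal{H}} \gamma(H) \geq -1$; since every $\gamma(H) \leq 0$, this forces $\gamma(H_3) \geq -1$, equivalently $\sum_{i=1}^4 x_{u_i} \geq 3 x_{u^\ast}$.

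If $W_{H_3} = \emptyset$, each $u_i$ satisfies $\lambda x_{u_i} = x_{u^\ast} + x_{u_{i-1}} + x_{u_{i+1}}$; summing over $i$ yields $(\lambda - 2)\sum_i x_{u_i} = 4 x_{u^\ast}$, so $\sum_i x_{u_i} = 4 x_{u^\ast}/(\lambda-2) < x_{u^\ast}$ for $\lambda > 49/5$, contradicting the lower bound.

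Otherwise $W_{H_3} \neq \emptyset$, and I pick $u_1$ with $x_{u_1} = \max\{x_{u_j} : 1 \leq j \leq 4\}$. The key step is to argue that every $w \in W_{H_3}$ must satisfy $N_{V(H_3)}(w) = \{u_1\}$. If some $w \in W$ has $w \sim u_j$ for $j \in \{2, 3, 4\}$ and $w \not\sim u_1$, I would set $G' = G^\ast - w u_j + w u_1$ and verify that $G'$ remains $F_6$-free: because $e(W) = 0$ eliminates $W$-$W$ bridges, a new $F_6$ in $G'$ could arise only through the fresh edge $u_1 w$, and the only potential $P_5$-patterns inside $N_{G'}(u_1)$, $N_{G'}(w)$, or $N_{G'}(u_k)$ for $k \neq 1, j$ have the shape $w_A - u_a - u^\ast - u_b - w_B$; a case split on $|N_{V(H_3)}(w)|$, using the $F_6$-freeness of $G^\ast$ at these same centers and the missing diagonals $u_1 u_3, u_2 u_4 \notin E(H_3)$, shows that no such $P_5$ appears. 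Lemma \ref{lem:Wu-Xiao-2005} then gives $\lambda(G') > \lambda(G^\ast)$, contradicting the maximality of $G^\ast$. The subcase in which $w \sim u_1$ already holds while $w \sim u_j$ for some $j \neq 1$ is handled in the same spirit: the $F_6$-constraint at $u_1$ forbids the bridge $w_A - u_2 - u^\ast - u_4 - w_B$ in $N(u_1)$, and combined with $w \in N_W(u_1) \cap N_W(u_j)$ this forces either $N_W(u_1) \cap N_W(u_2) = \emptyset$ or $N_W(u_1) \cap N_W(u_4) = \emptyset$, which permits a further shift that again contradicts maximality.

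Once the claim is established, $N_W(u_j) = \emptyset$ for $j \in \{2, 3, 4\}$, so $\lambda x_{u_j} = x_{u^\ast} + x_{u_{j-1}} + x_{u_{j+1}} \leq 3 x_{u^\ast}$, giving $x_{u_j} \leq 3 x_{u^\ast}/\lambda$. Adding $x_{u_1} \leq x_{u^\ast}$ we obtain $\sum_{i=1}^{4} x_{u_i} \leq (1 + 9/\lambda) x_{u^\ast}$, which is strictly less than $3 x_{u^\ast}$ for $\lambda > 49/5$, contradicting $\gamma(H_3) \geq -1$. The main obstacle is the $F_6$-verification after the shift: unlike $K_4$ or $K_4 - e$, where the diagonal edges produce an $F_6$ directly from a single vertex $w$, in $C_4$ the forbidden configurations are bridged $P_5$'s running through $u^\ast$ and two distinct $W$-vertices, so the shift analysis must simultaneously track the intersections $N_W(u_i) \cap N_W(u_{i \pm 1})$ for several indices $i$.
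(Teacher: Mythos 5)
Your overall strategy in Case 2 is genuinely different from the paper's, and it is exactly at your key structural claim that the argument breaks. The paper never shows that every $w\in W_{H_2}$ attaches only to the maximal vertex $u_1$; instead it proves the much weaker Claim that at most one $w\in W_{H_2}$ has $d_{H_2}(w)\geq 3$, shows every other such $w$ satisfies $x_w\leq\frac{\lambda-1}{\lambda}x_{u^\ast}$ (using $-1\leq\gamma(H_2)\leq 0$), and then feeds the summed eigenvalue equations into inequality (1). Your claim that $N_{V(H_3)}(w)=\{u_1\}$ for all $w\in W_{H_3}$ is where the gap lies, in two concrete ways. First, take $w\sim u_2,u_4$ with $w\not\sim u_1,u_3$, and suppose $u_1$ has another $W$-neighbour $w'$ with $w'\sim u_2$; this configuration is consistent with $F_6$-freeness of $G^\ast$ (check the centers $u_1$ and $u_2$: the longest induced paths are $P_4$'s). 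After the shift $G'=G^\ast-wu_2+wu_1$, the set $N_{G'}(u_1)$ contains the path $w-u_4-u^\ast-u_2-w'$, so $G'$ contains an $F_6$ centered at $u_1$ and Lemma \ref{lem:Wu-Xiao-2005} gives you nothing. Shifting $wu_4$ instead of $wu_2$ can dodge this particular instance, but then which edge to move depends on the global configuration of $N_W(u_1)$, and your blanket assertion that ``no such $P_5$ appears'' is false as stated.

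Second, and more seriously, $F_6$-freeness of $G^\ast$ does not exclude a vertex $w$ with $N_{V(H_3)}(w)\supseteq\{u_1,u_2,u_3\}$ or even all four cycle vertices (this is precisely why the paper's Claim \ref{result2.1} must tolerate one vertex of degree at least $3$ into the $C_4$). For such a $w$ you would have to delete two or three edges $wu_j$, and there is no admissible application of Lemma \ref{lem:Wu-Xiao-2005}: you cannot move two edges onto the single vertex $u_1$ that $w$ already neighbours, and moving them elsewhere loses both the eigenvector comparison $x_{u_1}\geq x_{u_j}$ and any control on $F_6$-freeness. Your phrase ``permits a further shift'' is unspecified exactly here. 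The bookkeeping surrounding the claim (the reduction to $\gamma(H_3)\geq-1$, i.e.\ $\sum_i x_{u_i}\geq 3x_{u^\ast}$, the case $W_{H_3}=\emptyset$, and the final estimate $\sum_i x_{u_i}\leq(1+\frac{9}{\lambda})x_{u^\ast}<3x_{u^\ast}$ once the claim is assumed) is correct, but the claim itself is not established and does not appear to be provable by edge shifting alone; you need something like the paper's eigenvector-entry bound for the low-degree $W$-vertices to close the argument.
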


\begin{proof}
Suppose that $G^\ast[U]$ contains a component $H_2\in\mathcal{H}$ satisfying $H_2\cong C_4$. If $\gamma(H_2)<-1$, combining Lemma \ref{component lemma} and (2), we have $e(W)<0$, a contradiction. So $-1\leq\gamma(H_2)\leq0$. We proceed by distinguishing the following two cases.

{\bf Case 1. }$|W_{H_2}|\leq1$.

For any $u\in V(H_2)$, we have $d_{G^\ast}(u)\leq4$. Then $\lambda x_u=\sum_{v\in N(u)}x_v\leq4x_{u^\ast}$. That is, $x_u\leq\frac{4x_{u^\ast}}{\lambda}$. Therefore, $\gamma(H_2)=\sum_{u\in V(H_2)}\frac{x_u}{x_{u^\ast}}-4\leq\frac{16}{\lambda}-4<-1$, a contradiction.

{\bf Case 2. }$|W_{H_2}|\geq2$.

Let $H=u_1u_2u_3u_4u_1$. We first show the following two claims.

\begin{claim}\label{result2.1}
There is at most one vertex $w$ in $W_{H_2}$ such that $d_{H_2}(w)\geq3$.
\end{claim}

\begin{proof}
Suppose to the contrary that there are two vertices $w_1$ and $w_2$ in $W_{H_2}$ satisfying $d_{H_2}(w_1)\geq3$ and $d_{H_2}(w_2)\geq3$. Then $|N_{H_2}(w_1)\cap N_{H_2}(w_2)|\geq2$. If $|N_{H_2}(w_1)\cap N_{H_2}(w_2)|\geq3$, without loss of generality, assume $\{u_1,u_2,u_3\}\subseteq N_{H_2}(w_1)\cap N_{H_2}(w_2)$, then $u_2\vee \{w_1u_1u^\ast u_3w_2\}$ is an $F_6$, a contradiction. If $|N_{H_2}(w_1)\cap N_{H_2}(w_2)|=2$, let $N_{H_2}(w_1)\cap N_{H_2}(w_2)=\{u_i,u_j\}$ with $i\neq j\in\{1,2,3,4\}$. If $u_i$ and $u_j$ are adjacent, without loss of generality, assume $\{u_i,u_j\}=\{u_1,u_2\}$. Since $d_{H_2}(w_1)\geq3$, we get that $u_3$ or $u_4$ is a neighbor of $w_1$. Suppose that $u_3$ is a neighbor of $w_1$. Because $d_{H_2}(w_2)\geq3$ and $|N_{H_2}(w_1)\cap N_{H_2}(w_2)|=2$, we know that $u_4$ is a neighbor of $w_2$. We can observe that $u_1\vee\{w_1u_2u^\ast u_4w_2\}$ is an $F_6$, a contradiction. If $u_i$ and $u_j$ are not adjacent, without loss of generality, assume $\{u_i,u_j\}=\{u_1,u_3\}$. Since $d_{H_2}(w_i)\geq3$ for $i\in\{1,2\}$ and $|N_{H_2}(w_1)\cap N_{H_2}(w_2)|=2$, suppose that $u_2$ is a neighbor of $w_1$ and $u_4$ is a neighbor of $w_2$. We can find that $u_1\vee\{w_1u_2u^\ast u_4w_2\}$ is an $F_6$, a contradiction. The proof is complete.
\end{proof}

\begin{claim}\label{result2.2}
For any $w\in W$ satisfying $d_{H_2}(w)\leq2$, we have $x_w\leq\frac{\lambda-1}{\lambda}x_{u^\ast}$.
\end{claim}

\begin{proof}
Suppose that there exists a $w_0\in W$ such that $d_{H_2}(w_0)\leq2$ and $x_{w_0}>\frac{\lambda-1}{\lambda}x_{u^\ast}$. Since $d_{H_2}(w_0)\leq2$, there are at least two vertices, denoted by $u_i$ and $u_j$ with $i\neq j\in\{1,2,3,4\}$, in $V(H_2)$ that are not neighbors of $w_0$. Then $\lambda x_{w_0}\leq\lambda x_{u^\ast}-x_{u_i}-x_{u_j}$. That is, $x_{u_i}+x_{u_j}\leq\lambda x_{u^\ast}-\lambda x_{w_0}<x_{u^\ast}$. Therefore, $\gamma(H_2)=\sum_{u\in V(H_2)}\frac{x_u}{x_{u^\ast}}-4<2+1-4=-1$, which contradicts $-1\leq\gamma(H_2)\leq0$. The proof is complete.
\end{proof}

Now we come back to show Lemma \ref{no C4 lemma}. By
\begin{align*}
\begin{cases}
\lambda x_{u_1}=x_{u_2}+x_{u_4}+x_{u^\ast}+\sum_{w\in N_{W_{H_2}}(u_1)}x_w,\\
\lambda x_{u_2}=x_{u_1}+x_{u_3}+x_{u^\ast}+\sum_{w\in N_{W_{H_2}}(u_2)}x_w,\\
\lambda x_{u_3}=x_{u_2}+x_{u_4}+x_{u^\ast}+\sum_{w\in N_{W_{H_2}}(u_3)}x_w,\\
\lambda x_{u_4}=x_{u_1}+x_{u_3}+x_{u^\ast}+\sum_{w\in N_{W_{H_2}}(u_4)}x_w,
\end{cases}
\end{align*}
we have $(\lambda-1)(x_{u_1}+x_{u_2}+x_{u_3}+x_{u_4})\leq8x_{u^\ast}+\sum_{i=1}^4\sum_{w\in N_{W_{H_2}}(u_i)}x_w$. That is, $(x_{u_1}+x_{u_2}+x_{u_3}+x_{u_4})\leq\frac{8x_{u^\ast}+\sum_{i=1}^4\sum_{w\in N_{W_{H_2}}(u_i)}x_w}{\lambda-1}$. By Claims \ref{result2.1} and \ref{result2.2}, we obtain $\sum_{i=1}^4\sum_{w\in N_{W_{H_2}}(u_i)}x_w\leq4x_{u^\ast}+(e(H_2,W)-3)\cdot\frac{\lambda-1}{\lambda}x_{u^\ast}$ and $\sum_{w\in W}d_U(w)\frac{x_w}{x_{u^\ast}}\leq4+(e(H_2,W)-3)\cdot\frac{\lambda-1}{\lambda}+e(U\setminus H_2,W)$. Recall that $\gamma(H)\leq0$. It follows that $\sum_{u\in V(H)}(d_H(u)-1)\frac{x_u}{x_{u^\ast}}\leq e(H)$ for any $H\in \mathcal{H}\setminus H_2$. Thus, by $\lambda>\frac{49}{5}$, we obtain
\begin{align*}
&\sum_{u\in U_+}(d_U(u)-1)\frac{x_u}{x_{u^\ast}}+\sum_{w\in W}d_U(w)\frac{x_w}{x_{u^\ast}}\\
&=\sum_{H\in \mathcal{H}\setminus H_2}\sum_{u\in V(H)}(d_H(u)-1)\frac{x_u}{x_{u^\ast}}+\sum_{u\in V(H_2)}\frac{x_u}{x_{u^\ast}}+\sum_{w\in W}d_U(w)\frac{x_w}{x_{u^\ast}}\\
&\leq\sum_{H\in \mathcal{H}\setminus H_2}e(H)+\frac{8x_{u^\ast}+\sum_{i=1}^4\sum_{w\in N_{W_{H_2}}(u_i)}x_w}{(\lambda-1)x_{u^\ast}}+\sum_{w\in W}d_U(w)\frac{x_w}{x_{u^\ast}}\\
&\leq\sum_{H\in \mathcal{H}\setminus H_2}e(H)+\frac{8}{\lambda-1}+\frac{4+(e(H_2,W)-3)\cdot\frac{\lambda-1}{\lambda}}{\lambda-1}+4+(e(H_2,W)-3)\cdot\frac{\lambda-1}{\lambda}+e(U\setminus H_2,W)\\ &=e(U_+)-4+\frac{8}{\lambda-1}+\frac{4}{\lambda-1}+\frac{e(H_2,W)}{\lambda}-\frac{3}{\lambda}+4+\frac{(\lambda-1)e(H_2,W)}{\lambda}-\frac{3(\lambda-1)}{\lambda}+e(U\setminus H_2,W)\\
&=e(U_+)+\frac{12}{\lambda-1}+e(H_2,W)-3+e(U\setminus H_2,W)\\
&<e(U_+)+e(U,W)+e(W)-1,
\end{align*}
which contradicts (1). This completes the proof.
\end{proof}

\begin{lem}\label{no K1re lemma}
For any $H\in\mathcal{H}$, we have $H\ncong K_{1,r}+e$ where $r\geq2$.
\end{lem}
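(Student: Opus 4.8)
The plan is to mirror the structure of the preceding three lemmas (Lemmas \ref{no K4 lemma}, \ref{no K4-e lemma}, \ref{no C4 lemma}): assume for contradiction that $G^\ast[U]$ has a component $H_3\cong K_{1,r}+e$ with $r\ge 2$, write $V(H_3)=\{u_1,u_2,u_3\}\cup S$ where $\{u_1,u_2,u_3\}$ induces the triangle $C_3$ (with $u_1$ the center of the star, so $u_1u_2, u_1u_3$ are the two "spoke-and-edge" edges and $u_2u_3$ the added edge) and $S$ is the set of the remaining $r-2$ leaves attached to $u_1$. Since by Lemma \ref{edge number of W} we now have $e(W)=0$, every $w\in W$ has all its $G^\ast$-neighbours inside $U$, which makes the $F_6$-forbidding arguments cleaner than in the earlier lemmas. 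First I would observe that if $\gamma(H_3)<-1$ then inequality (2) together with Lemma \ref{component lemma} forces $e(W)<0$, a contradiction, so $-1\le\gamma(H_3)\le 0$; this pins $\sum_{u\in V(H_3)}(d_{H_3}(u)-1)\frac{x_u}{x_{u^\ast}}$ close to $e(H_3)$ and hence forces the eigenvector entries on $V(H_3)$ to be large.

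The key structural step is to control $W_{H_3}$ using $F_6$-freeness. The triangle $\{u_1,u_2,u_3\}$ together with $u^\ast$ already gives $K_1\vee P_3$ on those four vertices (via the path $u_2u_1u_3$ plus $u^\ast$ joined to all three, actually $u^\ast\vee\{u_2u_1u_3\}$ is a $K_4$), so any $w\in W$ adjacent to two or more vertices of the triangle, combined with another such vertex or with a leaf, would complete an $F_6$; more precisely I expect to show (as in Claim \ref{result2.1}) that at most one vertex $w\in W$ can have $|N(w)\cap V(C_3)|\ge 2$, and in fact that each $w\in W$ has $|N(w)\cap V(H_3)|$ small. For the leaf set $S$ and for vertices adjacent only to $u_1$, note $d_{H_3}(u_j)=1$ for every $u_j\in S\cup\{u_2,u_3\}$ except these contribute $0$ to $\gamma$; so $\gamma(H_3)=(r-1)\frac{x_{u_1}}{x_{u^\ast}}+\frac{x_{u_2}+x_{u_3}}{x_{u^\ast}}-(r+1)$, and since each term $\frac{x_{u_i}}{x_{u^\ast}}\le 1$ this is already $\le 0$ with equality only in the degenerate case; the point is to use $-1\le\gamma(H_3)$ to deduce $x_{u_1}$ is within $\frac{1}{\lambda}$-order of $x_{u^\ast}$ (via $\lambda x_{u_1}=x_{u^\ast}+x_{u_2}+x_{u_3}+\sum_{u_j\in S}x_{u_j}+\sum_{w\in N_{W_{H_3}}(u_1)}x_w$) and then bound $\sum_{w\in W}d_U(w)\frac{x_w}{x_{u^\ast}}$, exactly as in the $C_4$ case, by splitting $W$ into vertices with many neighbours in $H_3$ (at most one, and handled separately) and vertices with $x_w\le\frac{\lambda-1}{\lambda}x_{u^\ast}$ (proved by the same argument as Claim \ref{result2.2}: a vertex missing two $H_3$-vertices of large eigenvector weight cannot itself be heavy).

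Finally I would plug these bounds into the right-hand side of (1). Writing $\mathcal{H}\setminus H_3$ contributes at most $\sum_{H\in\mathcal{H}\setminus H_3}e(H)$ by Lemma \ref{component lemma}, the $H_3$-part contributes at most $e(H_3)+(\text{error } O(1/\lambda))$, and $\sum_{w\in W}d_U(w)\frac{x_w}{x_{u^\ast}}\le e(U,W)+(\text{error})$ where the error terms, combined and using $\lambda>\frac{49}{5}$, total strictly less than $1$ minus $e(W)=0$; hence $\sum_{u\in U_+}(d_U(u)-1)\frac{x_u}{x_{u^\ast}}+\sum_{w\in W}d_U(w)\frac{x_w}{x_{u^\ast}}<e(U_+)+e(U,W)+e(W)-1$, contradicting (1). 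I expect the main obstacle to be the case analysis for how a heavy vertex $w\in W$ can attach to $H_3$: unlike $K_4-e$ or $C_4$, the component $K_{1,r}+e$ has unbounded size, so one must argue that leaves in $S$ are never adjacent to any heavy $w$ (otherwise a short $F_6$ appears through $u^\ast$, the triangle, and two leaves) and that the relevant eigenvector sum $\sum_{w\in N_{W_{H_3}}(u_1)}x_w$ does not blow up; carefully handling the unique possible heavy neighbour of the triangle, and confirming it still keeps the error below the threshold $1$, is where the delicate bookkeeping lies.
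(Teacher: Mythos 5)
Your high-level plan — assume $H_3\cong K_{1,r}+e$, use $-1\leq\gamma(H_3)\leq 0$, control $W_{H_3}$ via $F_6$-freeness, then contradict (1) — is indeed the paper's strategy. But the proposal has concrete gaps in precisely the places where the proof actually lives.

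\textbf{The intermediate claim about $W_{H_3}$ is wrong.} You propose to show (by analogy with Claim~\ref{result2.1} for $C_4$) that at most one $w\in W$ has $|N(w)\cap V(C_3)|\geq 2$. This fails when $r=2$: if $H_3\cong C_3=\{u_1,u_2,u_3\}$ and $w_1,w_2\in W$ are both adjacent to $u_1$ and $u_2$ only, then (recalling $e(W)=0$) the induced graph on $N(u_1)=\{u^\ast,u_2,u_3,w_1,w_2\}$ has only the edges $u^\ast u_2,u^\ast u_3,u_2u_3,u_2w_1,u_2w_2$, which contains no $P_5$, and likewise for $N(u_2)$; no $F_6$ arises. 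What the paper actually proves is the different statement that $d_{C_3}(w)\leq 2$ for \emph{every} $w\in W$ (no vertex outside is complete to the triangle), and the proof of this requires splitting on $r\geq 3$ versus $r=2$, with the $r=2$ subcase needing its own rotation/eigenvector argument that drives $\gamma(H_3)<-1$.

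\textbf{The derivation that $x_{u_1}$ (or the star centre) is near $x_{u^\ast}$ does not follow from $\gamma(H_3)\geq -1$.} From $\gamma(H_3)=(r-1)\frac{x_{u_1}}{x_{u^\ast}}+\frac{x_{u_2}+x_{u_3}}{x_{u^\ast}}-(r+1)\geq -1$ one only gets $\frac{x_{u_1}}{x_{u^\ast}}\geq\frac{r-2}{r-1}$, which is vacuous when $r=2$ and weak for small $r$. The paper instead obtains a sharp lower bound $x_{\text{centre}}\geq\frac{\lambda}{\lambda+1}x_{u^\ast}$ from the eigenvalue equation at $u^\ast$ combined with $\sum_{u\in U_0}\frac{x_u}{x_{u^\ast}}\leq 1$ (which itself needs $e(W)=0$ and $\gamma(H_3)\leq 0$), and this bound is what powers the final computation.

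\textbf{The case split on $\mathcal{H}$ is essential and missing.} After establishing $d_{C_3}(w)\leq 2$, the paper must separately handle: (a) another component $H'\cong K_{1,p}$ or $D_{a,b}$ (then $\gamma(H')\leq -1$ combined with $x_w<x_{u^\ast}$ already contradicts (2)); (b) another component $H'\cong K_{1,r'}+e$ (requiring a combined bound $\gamma(H_3)+\gamma(H')<-1$ to establish $x_w\leq\frac{\lambda-1}{\lambda}x_{u^\ast}$, then an explicit eigenvector computation); and (c) $H_3$ alone, where $r\geq 3$ must first be deduced from Lemma~\ref{edge number of U} and the argument routes through the $\frac{\lambda}{\lambda+1}$ bound described above. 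Your sketch collapses all three into ``plug into (1) and the errors are $O(1/\lambda)$,'' but the structure of the error terms, and which bound is available, is genuinely different across the cases.

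So: same overall route as the paper, but the key claim you propose to prove is false in general, and the case analysis you omit is the bulk of the argument.
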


\begin{proof}
Suppose to the contrary that $G^\ast[U]$ contains a component $H_3\cong K_{1,r}+e$ for some $r\geq2$. Then $H_3$ contains a triangle $C_3$. Let $V(C_3)=\{u_1,u_2,u_3\}$ with $d_{H_3}(u_1)=d_{H_3}(u_2)=2$. We proceed by considering the following two possible cases.

{\bf Case 1. }$|W_{H_3}|\leq1$.

By
\begin{align*}
\begin{cases}
\lambda x_{u_1}\leq x_{u_2}+x_{u_3}+x_{u^\ast}+x_{u^\ast},\\
\lambda x_{u_2}\leq x_{u_1}+x_{u_3}+x_{u^\ast}+x_{u^\ast},
\end{cases}
\end{align*}
we get $x_{u_1}\leq \frac{4x_{u^\ast}}{\lambda}$ and $x_{u_2}\leq \frac{4x_{u^\ast}}{\lambda}$. Therefore, $\gamma(H_3)=\frac{x_{u_1}}{x_{u^\ast}}+\frac{x_{u_2}}{x_{u^\ast}}+(r-1)\frac{x_{u_3}}{x_{u^\ast}}-(r+1)\leq \frac{8}{\lambda}-2$. Note that $\lambda>\frac{49}{5}$. It is easy to get $\gamma(H_3)<-1$ and $e(W)<0$, a contradiction.

{\bf Case 2. }$|W_{H_3}|\geq2$.

We first show that $d_{C_3}(w)\leq2$ for any $w\in W$. Otherwise, there exists a vertex $w_0\in W$ such that $d_{C_3}(w_0)=3$. If $r\geq3$, then $G^\ast$ contains an $F_6$, a contradiction. So $r=2$. That is, $H_3\cong C_3$. If there exists another vertex $w_1$ satisfying $d_{C_3}(w_1)\geq2$, without loss of generality, assume $u_1,u_2\in N_{C_3}(w_1)$. Then $u_1\vee\{w_0u_3u^\ast u_2w_1\}$ is an $F_6$, a contradiction. Thus, $d_{C_3}(w)\leq1$ for any $w\in W\setminus w_0$. Suppose $x_{u_1}\geq x_{u_2}\geq x_{u_3}$. We have $N_{C_3}(w)\cap V(H_3)=\{u_1\}$ or $\emptyset$ for any $w\in W\setminus w_0$. Otherwise, assume $N_{C_3}(w)\cap V(H_3)=\{u_i\}$ where $i\in\{2,3\}$, then $G'=G^\ast-wu_i+wu_1$ is $F_6$-free. By Lemma \ref{lem:Wu-Xiao-2005}, we know that $\lambda(G')>\lambda(G^\ast)$, which contradicts the maximality of $G^\ast$. Hence, $N_W(u_2)=N_W(u_3)=\{w_0\}$. It follows that $\lambda x_{u_2}=x_{u_1}+x_{u_3}+x_{u^\ast}+x_{w_0}\leq4x_{u^\ast}$ and $\lambda x_{u_3}=x_{u_1}+x_{u_2}+x_{u^\ast}+x_{w_0}\leq4x_{u^\ast}$. Equivalently, $x_{u_2}\leq\frac{4}{\lambda}x_{u^\ast}$ and $x_{u_3}\leq\frac{4}{\lambda}x_{u^\ast}$. Thus, by $\lambda>\frac{49}{5}$, we obtain $\gamma(H_3)=\sum_{i=1}^3\frac{x_{u_i}}{x_{u^\ast}}-3\leq1+\frac{8}{\lambda}-3<-1$ and $e(W)<0$, a contradiction. Therefore, $d_{C_3}(w)\leq2$ for any $w\in W$.

If there exists another non-trivial component $H'$, then $H'\cong K_{1,r'}+e$ or $K_{1,p}$ or $D_{a,b}$ where $r'\geq2$ and $p,a,b\geq1$. If $H'\cong K_{1,p}$ or $D_{a,b}$, by Lemma \ref{upperbound}, we know $\gamma(H')\leq-1$. Since $d_{C_3}(w)\leq2$ for any $w\in W$ and Lemma \ref{edge number of W}, we have $N_{G^\ast}(w)\subset N_{G^\ast}(u^\ast)$. Furthermore, $\lambda x_w=\sum_{u\in N_{G^\ast}(w)}x_u<\sum_{u\in N_{G^\ast}(u^\ast)}x_u=\lambda x_{u^\ast}$. That is, $x_w<x_{u^\ast}$ for any $w\in W$. Therefore, $\sum_{w\in W}d_U(w)\frac{x_w}{x_{u^\ast}}<e(U,W)$. By the first inequality of (2), we have $e(W)<\sum_{H\in\mathcal{H}}\gamma(H)-\sum_{u\in U_0}\frac{x_u}{x_{u^\ast}}+1\leq-1-\sum_{u\in U_0}\frac{x_u}{x_{u^\ast}}+1\leq0$, a contradiction. Next we consider $H'\cong K_{1,r'}+e$ with $r'\geq2$. Let $C'_3$ is a triangle in $K_{1,r'}+e$ with $V(C'_3)=\{v_1,v_2,v_3\}$ and $d_{H'}(v_1)=d_{H'}(v_2)=2$. Similarly, $d_{C'_3}(w)\leq2$ for any $w\in W$. Now we have $x_w\leq\frac{\lambda-1}{\lambda}x_{u^\ast}$ for any $w\in W$. Otherwise, there is a vertex $w_0\in W$ such that $x_{w_0}>\frac{\lambda-1}{\lambda}x_{u^\ast}$. Since $d_{C_3}(w_0)\leq2$ and $d_{C'_3}(w_0)\leq2$, It follows that there are $u_i\in V(C_3)$ and $v_j\in V(C'_3)$ which are not neighbors of $w_0$ for some $i,j\in\{1,2,3\}$. Because $e(W)=0$, we obtain $\lambda x_{w_0}\leq\lambda x_{u^\ast}-x_{u_i}-x_{v_j}$. Equivalently, $x_{u_i}+x_{v_j}\leq\lambda x_{u^\ast}-\lambda x_{w_0}<x_{u^\ast}$. Thus
\begin{align*}
\gamma(H_3)+\gamma(H')&=\sum_{u\in V(H_3)}(d_{H_3}(u)-1)\frac{x_u}{x_{u^\ast}}-\frac{x_{u_i}}{x_{u^\ast}}+\frac{x_{u_i}}{x_{u^\ast}}-e(H_3)\\
&+\sum_{u\in V(H')}(d_{H'}(u)-1)\frac{x_u}{x_{u^\ast}}-\frac{x_{v_j}}{x_{u^\ast}}+\frac{x_{v_j}}{x_{u^\ast}}-e(H')\\
&=\frac{x_{u_1}}{x_{u^\ast}}+\frac{x_{u_2}}{x_{u^\ast}}+(r-1)\frac{x_{u_3}}{x_{u^\ast}}-\frac{x_{u_i}}{x_{u^\ast}}+\frac{x_{u_i}}{x_{u^\ast}}-(r+1)\\
&+\frac{x_{v_1}}{x_{u^\ast}}+\frac{x_{v_2}}{x_{u^\ast}}+(r'-1)\frac{x_{v_3}}{x_{u^\ast}}-\frac{x_{v_j}}{x_{u^\ast}}+\frac{x_{v_j}}{x_{u^\ast}}-(r'+1)\\
&\leq r+\frac{x_{u_i}}{x_{u^\ast}}-(r+1)+r'+\frac{x_{v_j}}{x_{u^\ast}}-(r'+1)\\
&<r-(r+1)+r'-(r'+1)+1=-1.
\end{align*}
By (2) and Lemma \ref{component lemma}, we find that $e(W)\leq\sum_{H\in\mathcal{H}}\gamma(H)-\sum_{u\in U_0}\frac{x_u}{x_{u^\ast}}+1<-1+0+1=0$, a contradiction. Therefore, $x_w\leq\frac{\lambda-1}{\lambda}x_{u^\ast}$ for any $w\in W$. Furthermore, $\sum_{w\in W}d_U(w)\frac{x_w}{x_{u^\ast}}\leq\frac{\lambda-1}{\lambda}e(U,W)$. Since
\begin{align*}
\begin{cases}
\lambda x_{u_1}=x_{u_2}+x_{u_3}+x_{u^\ast}+\sum_{w\in N_{W}(u_1)}x_w,\\
\lambda x_{u_2}=x_{u_1}+x_{u_3}+x_{u^\ast}+\sum_{w\in N_{W}(u_2)}x_w,
\end{cases}
\end{align*}
we have
\begin{align*}
x_{u_1}+x_{u_2}&\leq\frac{4x_{u^\ast}+\sum_{i=1}^2\sum_{w\in N_{W}(u_i)}x_w}{\lambda-1}\\
&\leq\frac{4x_{u^\ast}+\frac{\lambda-1}{\lambda}e(H_3,W)x_{u^\ast}}{\lambda-1}\\
&=\frac{4x_{u^\ast}}{\lambda-1}+\frac{e(H_3,W)x_{u^\ast}}{\lambda}.
\end{align*}
Hence, by $\lambda>\frac{49}{5}$, we have
\begin{align*}
&\sum_{u\in U_+}(d_U(u)-1)\frac{x_u}{x_{u^\ast}}+\sum_{w\in W}d_U(w)\frac{x_w}{x_{u^\ast}}\\
&=\sum_{H\in \mathcal{H}\setminus H_3}\sum_{u\in V(H)}(d_H(u)-1)\frac{x_u}{x_{u^\ast}}+\sum_{u\in V(H_3)}(d_{H_3}(u)-1)\frac{x_u}{x_{u^\ast}}+\sum_{w\in W}d_U(w)\frac{x_w}{x_{u^\ast}}\\
&\leq\sum_{H\in \mathcal{H}\setminus H_3}e(H)+\frac{4}{\lambda-1}+\frac{e(H_3,W)}{\lambda}+(r-1)+\frac{\lambda-1}{\lambda}e(U,W)\\
&\leq e(U_+)-(r+1)+\frac{4}{\lambda-1}+e(U,W)+(r-1)\\
&=e(U_+)+e(U,W)+\frac{4}{\lambda-1}-2\\
&<e(U_+)+e(U,W)+e(W)-1,
\end{align*}
which contradicts (1). Thus there is exactly one non-trivial component $H_3$. By Lemma \ref{edge number of U}, we know that $r\geq3$.

By (2), we have $0=e(W)\leq\gamma(H_3)-\sum_{u\in U_0}\frac{x_u}{x_{u^\ast}}+1$. By Lemma \ref{component lemma}, we get $0\leq0-\sum_{u\in U_0}\frac{x_u}{x_{u^\ast}}+1$. That is, $\sum_{u\in U_0}\frac{x_u}{x_{u^\ast}}\leq1$. Since $\lambda x_{u^\ast}=x_{u_1}+x_{u_2}+x_{u_3}+\sum_{v\in N_{H_3}(u_3)\setminus\{u_1,u_2\}}x_v+\sum_{u\in U_0}x_u$, we have
\begin{align*}
\sum_{v\in N_{H_3}(u_3)\setminus\{u_1,u_2\}}x_v&=\lambda x_{u^\ast}-x_{u_1}-x_{u_2}-x_{u_3}-\sum_{u\in U_0}x_u\\
&\geq(\lambda-1)x_{u^\ast}-x_{u_1}-x_{u_2}-x_{u_3}.
\end{align*}
Therefore,
\begin{align*}
\lambda x_{u_3}&=x_{u_1}+x_{u_2}+x_{u^\ast}+\sum_{v\in N_{H_3}(u_3)\setminus\{u_1,u_2\}}x_v+\sum_{w\in N_W(u_3)}x_w\\
&\geq x_{u_1}+x_{u_2}+x_{u^\ast}+(\lambda-1)x_{u^\ast}-x_{u_1}-x_{u_2}-x_{u_3}+\sum_{w\in N_W(u_3)}x_w\\
&=\lambda x_{u^\ast}-x_{u_3}+\sum_{w\in N_W(u_3)}x_w.
\end{align*}
It follows that $x_{u_3}\geq\frac{\lambda}{\lambda+1}x_{u^\ast}$ and $\sum_{w\in N_W(u_3)}x_w\leq(\lambda+1)x_{u_3}-\lambda x_{u^\ast}\leq x_{u^\ast}$. For any $w\in W$, if $w\in N_W(u_3)$, then $d_{H_3}(w)=1$. Otherwise, recall that $r\geq3$, there is an $F_6$ in $G^\ast$. So $\lambda x_w\leq x_{u_3}+\sum_{u\in U_0}x_u\leq2x_{u^\ast}$. If $w\notin N_W(u_3)$, then $\lambda x_w\leq \lambda x_{u^\ast}-x_{u_3}\leq\lambda x_{u^\ast}-\frac{\lambda}{\lambda+1}x_{u^\ast}=\frac{\lambda^2}{\lambda+1}x_{u^\ast}$. That is, $x_w\leq\frac{\lambda}{\lambda+1}x_{u^\ast}$. Note that $\lambda>\frac{49}{5}$, we have $x_w\leq\frac{\lambda}{\lambda+1}x_{u^\ast}$ for any $w\in W$. By
\begin{align*}
\begin{cases}
\lambda x_{u_1}=x_{u_2}+x_{u_3}+x_{u^\ast}+\sum_{w\in N_{W}(u_1)}x_w,\\
\lambda x_{u_2}=x_{u_1}+x_{u_3}+x_{u^\ast}+\sum_{w\in N_{W}(u_2)}x_w,
\end{cases}
\end{align*}
we have
\begin{align*}
\lambda(x_{u_1}+x_{u_2})&\leq6x_{u^\ast}+\sum_{i=1}^2\sum_{w\in N_{W}(u_i)}x_w\\
&\leq6x_{u^\ast}+e(H_3,W)\cdot\frac{\lambda}{\lambda+1}x_{u^\ast}.
\end{align*}
Equivalently, $x_{u_1}+x_{u_2}\leq\frac{6}{\lambda}x_{u^\ast}+\frac{e(H_3,W)}{\lambda+1}x_{u^\ast}$. Hence, by $\lambda>\frac{49}{5}$, we have
\begin{align*}
&\sum_{u\in U_+}(d_U(u)-1)\frac{x_u}{x_{u^\ast}}+\sum_{w\in W}d_U(w)\frac{x_w}{x_{u^\ast}}\\
&=\sum_{u\in V(H_3)}(d_{H_3}(u)-1)\frac{x_u}{x_{u^\ast}}+\sum_{w\in W}d_U(w)\frac{x_w}{x_{u^\ast}}\\
&\leq\frac{6}{\lambda}+\frac{e(H_3,W)}{\lambda+1}+(r-1)+\frac{\lambda}{\lambda+1}e(U,W)\\
&=\frac{6}{\lambda}+e(H_3,W)+\frac{\lambda}{\lambda+1}e(U\setminus H_3,W)+(r-1)\\
&\leq\frac{6}{\lambda}+e(U,W)+(r-1)\\
&<r+e(U,W)\\
&= e(U_+)+e(U,W)-1,
\end{align*}
which contradicts (1). This completes the proof.
\end{proof}

{\bf Proof of Theorem \ref{main theorem}.} By Lemmas \ref{component cases}, \ref{no K4 lemma}, \ref{no K4-e lemma}, \ref{no C4 lemma},  and \ref{no K1re lemma}, we have that each non-trivial component of $G^\ast[U]$ is a $K_{1,r}$ or $D_{a,b}$ where $r,a,b\geq1$. By (2) and Lemma \ref{upperbound}, we obtain that the non-trivial component number of $G^\ast[U]$ is at most 1. If the non-trivial component number of $G^\ast[U]$ is 0, then $G^\ast$ is bipartite. By Lemma \ref{lem:Nikiforov}, $\lambda\leq\sqrt{m}<\frac{1+\sqrt{4m-3}}{2}$, a contradiction. Hence the non-trivial component number of $G^\ast[U]$ is 1. Let $H$ be the unique non-trivial component of $G^\ast[U]$. Then $H\cong K_{1,r}$ or $D_{a,b}$. Since $\gamma(H)\leq-1$ and $0=e(W)\leq\gamma(H)-\sum_{u\in U_0}\frac{x_u}{x_{u^\ast}}+1$, we have $\gamma(H)=-1$ and $U_0=\emptyset$. By (2), we also have $x_w=x_{u^\ast}$ for any $w\in W$ and $d_U(w)\geq1$.

If $H\cong D_{a,b}$, let $V(D_{a,b})=\{u_1,u_2,u_{11},\cdots,u_{1a},u_{21},\cdots,u_{2b}\}$ with $N_H(u_1)=\{u_2,u_{11},\cdots,u_{1a}\}$ and $N_H(u_2)=\{u_1,u_{21},\cdots,u_{2b}\}$. According to the definition of $\gamma(H)$ and $\gamma(H)=-1$, we have $x_{u_1}=x_{u_2}=x_{u^\ast}$. If $W_H=\emptyset$, then $\lambda x_{u^\ast}=x_{u_1}+x_{u_2}+\sum_{i=1}^ax_{u_{1i}}+\sum_{j=1}^bx_{u_{2j}}=2x_{u^\ast}+\sum_{i=1}^ax_{u_{1i}}+\sum_{j=1}^bx_{u_{2j}}$ and $\lambda x_{u_1}=x_{u_2}+x_{u^\ast}+\sum_{i=1}^ax_{u_{1i}}=2x_{u^\ast}+\sum_{i=1}^ax_{u_{1i}}$. Recall that $x_{u_1}=x_{u^\ast}$. It follows that $\sum_{j=1}^bx_{u_{2j}}=0$. which contradicts $x_u>0$ for any $u\in V(G^\ast)$ and $b\geq1$. If $|W_H|=1$, let $W_H=\{w\}$. By Lemma \ref{lem:Zhai-Lin-2021}, we know $d_U(w)\geq2$. Therefore, $x_w=x_{u^\ast}$. Furthermore, $\lambda x_w=\sum_{u\in N(w)}x_u$ and $\lambda x_{u^\ast}=\sum_{u\in N(u^\ast)}x_u$. By $N(w)\subseteq N(u^\ast)$, we obtain $N(w)=N(u^\ast)$. If $a$ or $b\geq2$, suppose $a\geq2$. It is easy to find $u_1\vee\{u_{11}wu_2u^\ast u_{12}\}$ is an $F_6$, a contradiction. Thus, $a=b=1$ and $m=11$, which contradicts $m\geq88$. If $|W_H|\geq2$, let $w_1,w_2\in W_H$. By Lemma \ref{lem:Zhai-Lin-2021}, we have $d_U(w_1)\geq2$ and $d_U(w_2)\geq2$. Therefore, $x_{w_1}=x_{w_2}=x_{u^\ast}$. Furthermore, $N(w_1)=N(w_2)=N(u^\ast)$. Observed that $u_1\vee\{w_1u_{11}u^\ast u_2w_2\}$ is an $F_6$, a contradiction. Hence $H\ncong D_{a,b}$. That is, $H\cong K_{1,r}$.

Let $V(H)=\{u_0,u_1,\cdots,u_r\}$ with the central vertex $u_0$. By the definition of $\gamma(H)$ and $\gamma(H)=-1$, we know $x_{u_0}=x_{u^\ast}$. Since $\lambda x_{u^\ast}=x_{u_0}+\sum_{i=1}^rx_{u_i}=x_{u^\ast}+\sum_{i=1}^rx_{u_i}$ and $\lambda x_{u_0}=x_{u^\ast}+\sum_{i=1}^rx_{u_i}+\sum_{w\in N_W(u_0)}x_w$, we have $\sum_{w\in N_W(u_0)}x_w=0$. Hence, $N_W(u_0)=\emptyset$. If $W\neq\emptyset$, then $d_U(w)\geq2$ for any $w\in W$. So $x_w=x_{u^\ast}$. However, $\lambda x_w=\sum_{u\in N(w)}x_u\leq\sum_{u\in N(u^\ast)}x_u-x_{u_0}<\lambda x_{u^\ast}$. It is a contradiction. Therefore, $W=\emptyset$. Thus, $G^\ast\cong K_1\vee K_{1,r}$ with $2r+1=m$. Equivalently, $G^\ast\cong K_2\vee \frac{m-1}{2}K_1$. This completes the proof.\hfill$\Box$
\section*{\bf Acknowledgments}

This work was supported by National Natural Science Foundation of China (Nos.12131013 and 12161141006).\\


\begin{thebibliography}{1}

\bibitem{Bondy-Murty-2008}
J. Bondy, U. Murty, Graph Theory, Springer, New York, 2008.

\bibitem{Brualdi-Hoffman-1985}
R. Brualdi, A. Hoffman, On the spectral radius of (0,1)-matrices, Linear Algebra Appl. 65 (1985) 133--146.

\bibitem{Cvetkovic-Rowlinson-2010}
D. Cvetkovi\'{c}, P. Rowlinson, S. Simi\'{c}, An Introduction to the Theory of Graph Spectra, Cambridge University Press, New York, 2010.

\bibitem{Fang-You-2023}
X. Fang, L. You, The maximum spectral radius of graphs of given size with forbidden subgraph, Linear Algebra Appl. 666 (2023) 114--128.

\bibitem{Li-Zhao-2024}
S. Li, S. Zhao, L. Zou, Spectral extrema of graphs with fixed size: forbidden fan graph, friendship graph or theta graph, arXiv:2409.15918.

\bibitem{Li-Lu-2023}
Y. Li, L. Lu, Y. Peng, Spectral extremal graphs for the bowtie, Discrete Math. 346 (2023) 113680.

\bibitem{Lin-Ning-2021}
H. Lin, B. Ning, B. Wu, Eigenvalues and triangles in graphs, Comb. Probab. Comput. 30 (2021) 258--270.

\bibitem{Liu-Wang-2024}
Y. Liu, L. Wang, Spectral radius of graphs of given size with forbidden subgraphs, Linear Algebra Appl. 689 (2024) 108--125.

\bibitem{Lu-Lu-2024}
J. Lu, L. Lu, Y. Li, Spectral radius of graphs forbidden $C_7$ or $C_6^\triangle$, Discrete Math. 347 (2024) 113781.

\bibitem{Min-Lou-2022}
G. Min, Z. Lou, Q. Huang, A sharp upper bound on the spectral radius of $C_5$-free/$C_6$-free graphs with given size, Linear Algebra Appl. 640 (2022) 162--178.

\bibitem{Nikiforov-2002}
V. Nikiforov, Some inequalities for the largest eigenvalue of a graph, Combin. Probab. Comput. 11 (2002) 179--189.

\bibitem{Nikiforov-2006}
V. Nikiforov, Walks and the spectral radius of graphs, Linear Algebra Appl. 418 (2006) 257--268.

\bibitem{Nikiforov-2009}
V. Nikiforov, The maximum spectral radius of $C_4$-free graphs of given order and size, Linear Algebra Appl. 430 (2009) 2898--2905.

\bibitem{Nikiforov}
V. Nikiforov, On a theorem of Nosal, arXiv:2104.12171.

\bibitem{Nosal-1970}
E. Nosal, Eigenvalues of graphs, Master's thesis, University of Calgary, 1970.

\bibitem{Sun-Li-2023}
W. Sun, S. Li, W. Wei, Extensions on spectral extrema of $C_5/C_6$-free graphs with given size, Discrete Math. 346 (2023) 113591.

\bibitem{Wang-2022}
Z. Wang, Generalizing theorems of Nosal and Nikiforov: Triangles and
quadrilaterals, Discrete Math. 345 (2022) 112973.

\bibitem{Wu-Xiao-2005}
B. Wu, E. Xiao, Y. Hong, The spectral radius of trees on $k$ pendant, Linear Algebra Appl. 395 (2005) 343--349.

\bibitem{Yu-Li-2024}
L. Yu, Y. Li, Y. Peng, Spectral extremal graphs for fan graphs, arXiv:2404.03423.

\bibitem{Zhai-Lin-2021}
M. Zhai, H. Lin, J. Shu, Spectral extrema of graphs with fixed size: cycles and complete bipartite graphs, European J. Combin. 95 (2021) 103322.

\bibitem{Zhai-Shu-2022}
M. Zhai, J. Shu, A spectral version of Mantel's theorem, Discrete Math. 345 (2022) 112630.

\bibitem{Zhang-Wang-2024}
Y. Zhang, L. Wang, On the spectral radius of graphs without a gem, Discrete Math. 347 (2024) 114171.








\end{thebibliography}
\end{document}